\documentclass[12pt]{article}
\usepackage{graphicx} % Required for inserting images
\usepackage[top=2.5cm,bottom=2.5cm,left=2.5cm,right=2.5cm]{geometry}

\usepackage{palatino}
\usepackage{latexsym}

\usepackage[mathscr]{eucal}
\usepackage{amsmath}
\usepackage{amsthm}
\usepackage{amsfonts}
\usepackage{amssymb}
\usepackage{amscd}
\usepackage{color}
\usepackage{graphicx}
\usepackage{graphics}
\usepackage{pifont}
\usepackage{subfigure}
\usepackage[makeroom]{cancel}
\usepackage[normalem]{ulem}
\usepackage[dvipsnames]{xcolor}
\usepackage{tikz}
\usepackage{cite}
\usepackage[hidelinks,pdfusetitle]{hyperref}

\theoremstyle{plain}
\newtheorem{theorem}{Theorem }

\newtheorem{lemma}[theorem]{Lemma}
\newtheorem{proposition}[theorem]{Proposition}
\newtheorem{definition}[theorem]{Definition}

\newcommand{\R}{{\mathbb R}}
\newcommand{\N}{{\mathbb N}}
\newcommand{\T}{{\mathbb T}}
\newcommand{\C}{{\mathbb C}}
\newcommand{\la}{{\langle}}
\newcommand{\ra}{{\rangle}}

\title{Approximate controllability in small times of bilinear Schrödinger equations with magnetic drift}

\begin{document}
\author{Eugenio Pozzoli\footnote{Univ Rennes, CNRS, IRMAR - UMR 6625, F-35000 Rennes, France (eugenio.pozzoli@univ-rennes.fr)}}
\maketitle

\abstract{%Recent advances on geometric control theory have shown that arbitrary $L^2$ local phases can be approximately controlled in small time for certain physically relevant Schrödinger PDEs on $\T^d$ and $\R^d$ \cite{duca-nersesyan,duca-pozzoli}. In this paper we build on such results and show that, under the same hypothesis, compositions with large families of diffeomorphisms (more precisely, the ones that can be written as a product of gradient flows) can be approximately controlled in small time, in $L^2$. In particular, in the one-dimensional case $d=1$, all compactly supported and isotopic to the identity diffeomorphisms can be obtained. 
%We show that small-time approximate controllability (STAC) of Schrödinger equations is stable under perturbation of the drift term by a large class of magnetic fields. 

We study the small-time approximate controllability of bilinear Schrödinger equations, where the drift is a magnetic Schrödinger operator and the control is an electric potential. We prove this property in two circumstances: (i) in $\R^d$, with a quadratic and an additional generic bounded electric potential in the control, and with a uniform magnetic field in the drift; (ii) in $\R^d$ or $\T^d$, with control electric potentials supported on a finite number of Hermite or Fourier eigenfunctions, and with any differentiable magnetic potential in the drift.

%We show that small-time approximate controllability (STAC) of Schrödinger equations with magnetic drift term.

}

\medskip

\textbf{Keywords:} Magnetic Schrödinger operators, bilinear systems, approximate controllability, quantum control

\textbf{AMS Codes:} 35J10, 93C20, 81Q93

%\tableofcontents

% and show that the small-time approximate controllability problem for these Schrödinger PDEs posed on some boundaryless manifolds $M$ can be related to the controllability problem in the diffeomorphisms group of $M$.

%The main idea is that we can approximate the small-time behaviour of the solution of some controlled Schr\"odinger equations, with the solution of a transport equation along an arbitrary gradient vector field.

 %(more precisely, the ones that can be written as a product of gradient flows) 

% We then further apply these ideas and provide physically relevant examples of bilinear Schrödinger equations which are globally approximately controllable in arbitrarily small time.}

%%%%%%%%%%%%%%%%%%%%%%%%%%%%%%%%%%%%%%%%%%%%%%%%%%%%
\section{Introduction}
%---------------------------------------------------
\subsection{The model}
We consider the following initial value problem for magnetic Schrödinger equations controlled through an electric potential, on a smooth boundaryless Riemannian manifold $M$
\begin{equation}\label{eq:schro}
\begin{cases}
i\partial_t\psi(t,x)=\left(-\Delta_{A(x)}+V(x)+\sum_{j=1}^m u_j(t)W_j(x)\right)\psi(t,x), & (t,x) \in (0,T) \times M, \\
\psi(0,\cdot)=\psi_0.
\end{cases}
\end{equation}
The linear unbounded operator 
$$-\Delta_{A}:=
%-\sum_{j=1}^d(\partial_{x_j}-iA_j(x))^2=
-\Delta+|A|^2+i({\rm div}A+2\langle A,\nabla\rangle),$$
is called the magnetic Laplacian: here, $\Delta$ is the Laplace-Beltrami operator of $M$, ${\rm div}$ is the divergence operator computed w.r.t. the Riemannian volume form, $\langle \cdot,\cdot\rangle $ and $|\cdot|$ are respectively the Riemannian product and norm, $\nabla$ is the Riemannian gradient, and $A=(A_1,\dots,A_d)$ is a differentiable magnetic potential. The operator $\Delta_A$ is symmetric when defined on smooth compactly supported functions. The time-dependent electric potential $V(x)+\sum_{j=1}^m u_j(t)W_j(x)$ is possibly unbounded, and defined on a suitable domain. The time-independent magnetic Schrödinger operator $-\Delta_{A}+V$ is usually referred to as \emph{the drift}.

System \eqref{eq:schro} describes the dynamics of a quantum particle on a manifold $M$, subject to external electric fields with potentials $V,W_1,\dots,W_m$, and an external magnetic field with potential $A$. It is used to model a variety of physical situations, such as the Stark, resp. Zeeman, Hamiltonian for an atom in an electric, resp. magnetic, field (see, e.g., \cite[Chapter X.4]{rs2}).

We study the so-called quantum controllability problem. We think of the function $u(t)=(u_1(t),\dots,u_m(t))$ as a \emph{control function} that can be chosen and implemented in order to change the \emph{state} $\psi$ of the system. More precisely, given an initial and a final state $\psi_0,\psi_1$, we would like to find a control which steers the system from $\psi_0$ to $\psi_1$. We are in particular interested in the family of states that can be approximately reached from $\psi_0$ in arbitrarily small times. 

Assuming that we only consider piece-wise constant controls $u$, if the Hamiltonian is essentially self-adjoint on each time interval where the control is constant, the solution of \eqref{eq:schro} is well-defined at every time $t\geq 0$, from any $L^2$ initial state $\psi_0$. Denoting it by $\psi(t;u,\psi_0)$, we have that $\psi$ lives in the unitary sphere $\mathcal{S}$ of $L^2(M,\C)$,
\begin{equation} \label{def:S}
\mathcal{S}:=\{\psi\in L^2(M,\C)\, ;\, \|\psi\|_{L^2}=1\}.
\end{equation}

\begin{definition}
%[Small time $L^2$-approximate controllability] 
We say that \eqref{eq:schro} is \textbf{approximately controllable} if, for every 
$\psi_0, \psi_1\in \mathcal{S}$ and
$\varepsilon >0$, there exist 
a time $T \geq 0$,
a global phase $\theta \in [0,2\pi)$
and %a locally integrable 
a control $u\in PWC([0, T ], \R^m)$ such that 
%the Cauchy problem \eqref{eq:schro} 
%has a unique solution $\psi \in C^0([0,T],L^2(M))$, and
$\|\psi( T;u,\psi_0)-e^{i\theta} \psi_1\|_{L^2}<\varepsilon$.

We say that \eqref{eq:schro} is \textbf{small-time approximately controllable} if, for every 
$\psi_0, \psi_1\in \mathcal{S}$ and
$\varepsilon >0$, there exist 
a time $T \in[0,\varepsilon]$,
a global phase $\theta \in [0,2\pi)$
and %a locally integrable 
a control $u\in PWC([0, T ], \R^m)$ such that 
%the Cauchy problem \eqref{eq:schro} 
%has a unique solution $\psi \in C^0([0,T],L^2(M))$, and
$\|\psi( T;u,\psi_0)-e^{i\theta} \psi_1\|_{L^2}<\varepsilon$.
\end{definition}

%In quantum mechanics, a state is defined up to a global phase. Hence, controlling towards $e^{i\theta}\psi_1$ for some constant $\theta\in \R$ is physically the same as controlling towards $\psi_1$.
\subsection{Bibliographic comments}
In the seminal works \cite{beauchard1,beauchard-coron,coron,beauchard-laurent}, K. Beauchard, J.-M. Coron, and C. Laurent showed that equations of the form (\ref{eq:schro}), with $A=0$, are locally exactly controllable in $H_{(0)}^s(0,1)$ if and only if $s\geq 3$.

Later, approximate controllability of \eqref{eq:schro} (with $A=0$) has been proved to be generic in large time, independently and with different methods by V. Nersesyan \cite{nerse,nersesyan} and by U. Boscain, T. Chambrion, P. Mason, and M. Sigalotti \cite{BCMS,MS-generic}.

The capability of controlling quantum systems in small times has particularly relevant physical implications, both from a fundamental viewpoint and for technological applications. Indeed, those systems in reality suffer of extremely short lifespan before decaying. Their time-optimal control is in fact a crucial challenge in both physics and engineering (see e.g. the pioneering theoretical work \cite{khaneja-brockett-glaser} of N. Khaneja, S. J. Glaser, and R. Brockett on the minimal control-time for spin systems).

%The control of Schrödinger equations in small time represents today an open challenge in mathematical control theory. 

From a mathematical perspective, several progress has been obtained recently in the small-time control of Schrödinger PDEs such as \eqref{eq:schro} with $A=0$. 

K. Beauchard, J.-M. Coron, and H. Teismann showed that there exist examples of equations as \eqref{eq:schro} which are approximately controllable in large times, but not in small times \cite{beauchard-coron-teismann,beauchard-coron-teismann2}. They showed more precisely that Gaussian states are approximately preserved, at least for small times, when the control potential is linear and the drift potential is sub-quadratic. A. Duca and V. Nersesyan then proved that the approximate control between eigenstates on the torus is actually possible, by developing a Lie bracket strategy to change the phase of the wavefunction \cite{duca-nersesyan}. Recently, in collaboration with K. Beauchard, we showed that the previous examples studied in \cite{beauchard-coron-teismann,beauchard-coron-teismann2} are somehow singular and that small-time global approximate controllability is indeed possible in several circumstances \cite{beauchard-Pozzoli,beauchard-Pozzoli2} (we refer also to the works \cite{agrachev-bettina-eugenio} in collaboration with A. Agrachev and B. Kazandjian for an algebraic viewpoint and \cite{beauchard-carles-pozzoli} with K. Beauchard and R. Carles for nonlinear Schrödinger equations).

In the present paper, we extend those previous results of small-time approximate controllability to the case of Schrödinger equations \eqref{eq:schro} with magnetic drift term $A\neq 0$.

Some previous approximate controllability properties for magnetic Schrödinger equations were studied e.g. in \cite{chittaro-mason} by F. Chittaro and P. Mason (in large times and for uniform magnetic field in dimensions resp. $3$), in \cite{balsameda-etal} by A. Balsameda, D. Lonigro, and J. M. Pérez-Pardo (in large times and for constant magnetic potential in dimension $1$), in \cite{duca-pozzoli} by A. Duca and the author (in small times and for constant magnetic potential in any dimension $d$), and also by A. Duca, R. Joly, and D. Turaev in \cite{duca-joly-turaev} (in large times and for magnetic potential depending on transformations of the domain, in any dimension $d\geq 2$); in all those previous works, the magnetic field was used as a control operator instead of a drift operator. We also note that, if the Schrödinger drift operator $-\Delta_A+V$ has discrete spectrum (which is not supposed throughout this article), e.g. when $M$ is compact or either $A$ or $V$ are confining, the (large-time) approximate controllability of \eqref{eq:schro} could also be studied with finite-dimensional (Galerkin) approximation techniques developed in \cite{BCMS}.
%and more in general by furnishing informations on the set $\text{Adh}_{\mathcal{H}} \text{Reach}_{\text{st}}(\psi_0)$.

 \subsection{Results}

\subsubsection{Uniform magnetic field in $\R^d$}

 The first example of systems of the form \eqref{eq:schro} we study is posed in $\R^d, d\in \N$, with a differentiable magnetic potential tangent to all sphere of $\R^d$ centred at the origin, control on the frequency of a quadratic electric potential, and additional generic bounded control electric potential,
 \begin{equation}\label{eq:magnetic-schro}
 \begin{cases}
 i\partial_t\psi(t,x)=(- \Delta_{A(x)}+V(x) +u_{1}(t)|x|^2+u_2(t)W(x))\psi(t,x), (t,x)\in (0,T)\times \R^d\\
 \psi(\cdot,0)=\psi_0,
 \end{cases}
\end{equation}
where $A\in C^1(\R^d),W\in L^\infty(\R^d)$ and 
\begin{equation} \label{Hyp:V_transp}
V\in L^\infty_{\rm loc}(\R^d,\R) \quad \text{ and } \quad
  \exists a,b\geq 0,\, \forall x \in \R^d,\, V(x)\geq -a |x|^2-b.
\end{equation}
These assumptions guarantee the essential self-adjointess for all $u=(u_1,u_2)\in \R^{2}$ of the magnetic Schrödinger operator $- \Delta_{A}+V(x) +u_{1}|x|^2+u_2W(x)$ with domain $C^\infty_c(\R^d)$ on $L^2(\R^d)$ (see, e.g., \cite[Theorem 5.3]{shubin}). Therefore, for any $\psi_0\in L^2(\R^d),u\in PWC([0,\infty),\R^2)$, the (unique) solution of (\ref{eq:magnetic-schro}) is well-defined and \\$\psi(.;u,\psi_0) \in C^0([0,T],\mathcal{S})$.

 We prove the following result.
\begin{theorem}\label{thm:constant-magnetic}
%Let $M=\R^3,m=1$, $V\in L^\infty_{\rm loc}(\R^3)$, $W_0(x)=|x|^2$ and $A(x)=x\times b$ be a uniform magnetic field along the direction $b\in\R^3$. 
Let $A\in C^1(\R^d,\R^d)$ be such that 
$$\langle A(x),x\rangle=0, \forall x\in \R^d.$$ There exists a dense subset of control electric potentials $\mathcal{D}\subset L^\infty(\R^d)$ such that, for every $W\in\mathcal{D}$, system \eqref{eq:magnetic-schro} is small-time approximately controllable.%=(x_2b_3-x_3b_2,x_3b_1-x_1b_3,x_1b_2-x_2b_1). $$
 \end{theorem}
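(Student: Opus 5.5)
The plan is to reduce to the non-magnetic small-time approximate controllability result for \eqref{eq:magnetic-schro} with $A=0$, obtained with K. Beauchard in \cite{beauchard-Pozzoli,beauchard-Pozzoli2}. That result says: for $V$ as in \eqref{Hyp:V_transp}, controls $u_1|x|^2+u_2W$, and $W$ in a dense subset of $L^\infty(\R^d)$, the system is small-time approximately controllable. Its mechanism consists of three ingredients:
\begin{enumerate}
\item large controls on $|x|^2$ generate, in small time, the phases $e^{-i\tau|x|^2}$;
\item conjugating these phases with the free Laplacian produces the dilations $\psi\mapsto e^{d s/2}\psi(e^{s}\cdot)$ in small time, through the Lie bracket $[\Delta,|x|^2]$;
\item nested brackets with $W$ generate, in small time, the phases $e^{i\varphi}$ for $\varphi$ in a dense subspace; the genericity of $W$ is what makes the saturation work.
\end{enumerate}
Together with the group structure, these yield approximate reachability of all of $\mathcal{S}$.

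The key observation is that the hypothesis $\langle A(x),x\rangle=0$ makes $A$ interact trivially with the radial quantities. Expanding $-\Delta_A=-\Delta+|A|^2+i({\rm div}A+2\langle A,\nabla\rangle)$, the term $2i\langle A,\nabla\rangle$ annihilates radial functions, since $\langle A,\nabla f(|x|)\rangle=f'(|x|)\langle A,x\rangle/|x|=0$. I would therefore verify the following.

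First, the small-time phase limit is unaffected by the drift. For any $\varphi\in\{|x|^2\}\cup\{\text{bracket phases}\}$, one has $e^{-i\delta(-\Delta_A+V+\varphi/\delta)}\to e^{-i\varphi}$ strongly as $\delta\to0$. This follows from a Trotter/Duhamel argument once $\varphi$ is regular enough, and the proof of \cite{beauchard-Pozzoli} transfers verbatim with $-\Delta_A+V$ in place of $-\Delta+V$.

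Second, the conjugation step $e^{i\tau|x|^2}e^{-i\delta(-\Delta_A+V)}e^{-i\tau|x|^2}$, with $\tau\delta$ fixed and $\delta\to0$, produces the generator $e^{i\tau|x|^2}(-\Delta_A)e^{-i\tau|x|^2}=-\Delta_A+4\tau^2|x|^2-2i\tau(2\langle x,\nabla\rangle+d)-4\tau\langle A,x\rangle$. The term $\langle A,x\rangle$ vanishes by hypothesis, so the dominant part is the same dilation generator $x\cdot\nabla+d/2$ as in the non-magnetic case. The remainder is $-\Delta_A+V$, which is lower order and disappears in the limit. Hence dilations are still approximately reachable in small time.

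Third, I need the analogous computation for the brackets with $W$. The commutator $[\Delta_A,\varphi]$ equals $[\Delta,\varphi]+2i\langle A,\nabla\varphi\rangle$, and the second bracket $[[\Delta_A,\varphi],\varphi]=-2|\nabla\varphi|^2$ is unchanged because multiplication operators commute. So the phase $|\nabla\varphi|^2$ is generated exactly as before, via $e^{i\varphi/\sqrt\delta}e^{-i\delta H}e^{-i\varphi/\sqrt\delta}$ with $\delta\to0$. The magnetic first-order term $2i\langle A,\nabla\varphi\rangle\delta^{1/2}$ only contributes vanishing corrections; it is $O(\sqrt\delta)$ applied to $\psi$ in $H^1$, and density handles general $\psi$. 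This is the crucial point: the phase-generation machinery relies only on second brackets of multiplication operators with the kinetic part, and these do not see $A$. Consequently the dense set $\mathcal{D}$ from \cite{beauchard-Pozzoli2}, built by saturation from $\{1,|x|^2,W\}$ under $\varphi\mapsto|\nabla\varphi|^2$, works unchanged.

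The main obstacle I anticipate is the rigorous justification of these limits with unbounded, $C^1$-only $A$. Specifically, one must show strong convergence of conjugated propagators on a common dense core, such as the Schwartz class or $C_c^\infty$, when $-\Delta_A+V$ is merely essentially self-adjoint. I would handle this by working on $C_c^\infty$ initial data, proving a uniform-in-$\delta$ $H^1_A$-type energy bound for the conjugated evolution, and applying Duhamel. Then I would conclude by density and unitarity, exactly as in the non-magnetic proofs. The final assembly is then identical: reachable phases and dilations approximate any unitary transport map, and with the controllability argument of \cite{beauchard-Pozzoli2} this gives the small-time approximate controllability asserted in Theorem~\ref{thm:constant-magnetic}.
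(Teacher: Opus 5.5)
Your dilation step is correct and coincides with Item 2 of Proposition \ref{Prop:exp(isDelta)}, which is the only place where $\langle A,x\rangle=0$ is used. Ingredient 3, however, is a genuine gap, and it is not how \cite{beauchard-Pozzoli} works.

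\begin{itemize}
\item No available result says that saturating $\mathrm{span}\{1,|x|^2,W\}$ under $\varphi\mapsto|\nabla\varphi|^2$ yields a dense set of phases for $W$ in a dense subset of $L^\infty$.
\item The statement cannot be proved this way. The bracket limit $e^{iW/\sqrt\delta}e^{-i\delta H}e^{-iW/\sqrt\delta}\to e^{\mp i|\nabla W|^2}$ requires $W\in C^2$. An $L^\infty$-dense set $\mathcal{D}$ must contain discontinuous functions, since uniform limits of continuous functions are continuous.
\item Even for smooth $W$, density of the saturated space would be a new genericity theorem. It fails for every radial $W$, because radial functions are stable under $\varphi\mapsto|\nabla\varphi|^2$.
\item The saturation arguments of \cite{duca-nersesyan,duca-pozzoli} only work for the explicit Fourier, linear and Gaussian controls of Theorem \ref{thm:examples}.
\item Even granting dense phases, the claim that ``phases and dilations approximate any transport map'' skips two steps. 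You would still need the passage to gradient flows (Proposition \ref{prop:gradient-control}, where the extra term $2i\langle A,\nabla\varphi\rangle$ has to be removed) and then the argument of \cite{beauchard-Pozzoli2}.
\end{itemize}

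The missing idea is that the magnetic drift can be removed entirely, with no bracket involving $W$.
\begin{itemize}
\item Conjugating by dilations, $D_{t^{1/2}}e^{i\sigma t(\Delta_A-V)}D_{t^{-1/2}}$ has generator $i\sigma\bigl(\Delta-tV(t^{1/2}x)-t|A|^2(t^{1/2}x)-it\,{\rm div}A(t^{1/2}x)-2it^{1/2}\langle A(t^{1/2}x),\nabla\rangle\bigr)$.
\item All terms other than $\Delta$ vanish on $C^\infty_c$ as $t\to0$, for any $A\in C^1$. Hence the \emph{non-magnetic} group $e^{i\sigma\Delta}$, $\sigma\ge0$, is $L^2$-STAR.
\item By Trotter with $e^{-i\sigma|x|^2/n}$, the harmonic oscillator group $e^{i\sigma(\Delta-|x|^2)}$ is also $L^2$-STAR (Items 3--4).
\item The phases $e^{isW}$ are STAR as well, using large $u_2$ on short intervals.
\item Trotter products and Lemma \ref{lem:reachable-operators-bis} then make every propagator of $i\partial_t\psi=(-\Delta+|x|^2+v(t)W)\psi$, in arbitrary time, STAR for \eqref{eq:magnetic-schro}.
\end{itemize}

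Genericity enters only at this point. $\mathcal{D}$ is the dense set of $W$ for which this harmonic oscillator with bounded control $W$ is approximately controllable in some, possibly large, time. The conclusion then follows as in \cite{beauchard-Pozzoli}.
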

 Typical examples in dimensions $d=2,3$ of magnetic fields verifying $\langle A,x\rangle=0$ are given by uniform magnetic fields, i.e., $A(x_1,x_2)=b(-x_2,x_1), b\in \R,$ and $A(x)=x\times b, b\in \R^3$.
 
It is worth noting that the selection of an appropriate electric potential is necessary for controllability, as pointed out in the following counter-example.
\begin{theorem}\label{thm:obstruction}
Let $d=2$ and $A(x_1,x_2)=b(-x_2,x_1), b\in \R,$ or $d=3$ and $A(x)=x\times b, b\in \R^3$. Let also $V,W=0$. Then system \eqref{eq:magnetic-schro} is not approximately controllable.%\begin{equation}\label{eq:non-controllable}
% i\partial_t\psi(t,x)=(- \Delta_{A(x)} +u_{0}(t)|x|^2)\psi(t,x).
%\end{equation}
\end{theorem}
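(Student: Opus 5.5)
We have to show that the equation $i\partial_t\psi=(-\Delta_A+u_1(t)|x|^2)\psi$, with $A$ a uniform field in $d=2$ or $d=3$, is not approximately controllable. The plan is to find a conserved quantity. Since $A$ is linear with $\langle A,x\rangle=0$, we have ${\rm div}A=0$, and $-\Delta_A=-\Delta+|A|^2+2i\langle A,\nabla\rangle$. For $d=2$ with $A=b(-x_2,x_1)$, this gives $2i\langle A,\nabla\rangle=2ib(-x_2\partial_1+x_1\partial_2)=2ib\,\partial_\theta=-2bL$. Here $L=-i\partial_\theta$ is the angular momentum, and $|A|^2=b^2|x|^2$. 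Hence
$$H(u)=-\Delta+(b^2+u)|x|^2-2bL.$$
In $d=3$ with $A=x\times b$, after rotating coordinates so that $b=|b|e_3$, one gets $|A|^2=|b|^2(x_1^2+x_2^2)$ and the drift term $-2|b|L_3$, with $L_3$ the angular momentum about the $b$ axis.

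The key structural fact is that $L$ (resp. $L_3$) commutes with $-\Delta$, with $|x|^2$, with $x_1^2+x_2^2$, and with itself. So $L$ commutes with $H(u)$ for every $u$. The propagator of any piecewise constant control is then a product of operators $e^{-i\tau H(u_k)}$, each of which commutes with the self-adjoint operator $L$, so each preserves the spectral projections of $L$.

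Next, decompose $L^2=\bigoplus_{m\in\mathbb Z}\mathcal H_m$ into angular momentum sectors. In $d=3$ this is the Fourier decomposition in the angle around the $b$ axis. The solution satisfies $\|P_m\psi(t)\|=\|P_m\psi_0\|$ for every $m$ and every control. Now take $\psi_0\in\mathcal H_0$, for instance a radial Gaussian, and $\psi_1\in\mathcal H_1$, for instance $x_1+ix_2$ times a Gaussian, normalised. Then
$$\|\psi(T)-e^{i\theta}\psi_1\|^2\ge \|P_0\psi(T)\|^2=\|P_0\psi_0\|^2=1$$
for all $T$, $\theta$ and $u$, which contradicts approximate controllability with $\varepsilon<1$.

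The only delicate step is the rigorous commutation with unbounded operators. The approach is to check that $e^{isL}$, which is rotation by angle $s$, leaves $C_c^\infty$ invariant and commutes with $H(u)$ there. Since $H(u)$ is essentially self-adjoint on $C_c^\infty$ by the cited result, the rotations commute with its closure, hence with $e^{-i\tau H(u)}$. It follows that the sectors $\mathcal H_m$ are invariant under the propagator.

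The Hamiltonians also commute with all rotations, so the argument could alternatively be phrased as preservation of rotational symmetry classes. It works for any $b$, including $b=0$.
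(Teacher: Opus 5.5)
Your proof is correct, and it takes a different route from the paper's.

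\emph{The paper's route.} It starts from a centred Gaussian and uses the Trotter--Kato formula to approximate the propagators by products of $e^{it\Delta}$, the rotation flow $e^{t\langle A,\nabla\rangle}$ and multiplications by $e^{ic|x|^2}$. Each factor preserves the set $\mathcal{G}$ of centred normalised Gaussians, and the paper concludes because $\mathcal{G}$ is closed in $\mathcal{S}$.

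\emph{Your route.} You use a conserved quantity instead. Rotations about the field axis preserve $C_c^\infty$ and commute with $H(u)$ there. By essential self-adjointness they then commute with every $e^{-i\tau H(u)}$, so the norms $\|P_m\psi\|$ are invariant and the sector $\mathcal{H}_0$ is invariant under all propagators.

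\emph{Comparison.} Your argument needs no product formula for three unbounded operators and no closedness lemma. It applies verbatim to any radial $V,W$ and to any axially symmetric magnetic potential. This also makes clear why Theorem \ref{thm:constant-magnetic} needs a generic, symmetry-breaking $W$. The paper's route gives a stronger confinement: a Gaussian can only reach a finite-dimensional family. It also rests on the quadratic structure rather than on symmetry, so that kind of argument can survive Gaussian-preserving but non-symmetric terms, such as linear potentials.

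\emph{The case $d=3$.} You correctly find $|A|^2=|b|^2(x_1^2+x_2^2)$. The paper's identity $|A|^2=|b|^2|x|^2$ holds only for $d=2$, since in general $|x\times b|^2=|b|^2|x|^2-\langle b,x\rangle^2$. So the isotropic family $\mathcal{G}$ is not literally invariant in $d=3$, and the paper's argument would need Gaussians that are anisotropic along $b$. Your argument needs no such adjustment.

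One harmless slip: with $b=|b|e_3$ and $L_3=-i(x_1\partial_2-x_2\partial_1)$, the drift term is $+2|b|L_3$ rather than $-2|b|L_3$.
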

Such non-controllability is a consequence of the fact that Gaussian states are preserved by system \eqref{eq:schro} under the hypothesis of Theorem \ref{thm:obstruction}, as described in Section \ref{sec:thm:obstruction}.
%{\color{red}The idea is that $|A|^2=b^2|x|^2$, and that the flow of the vector field $A$ preserves the sphere, because $A$ points orthogonally to $x$. Moreover ${\rm div}A=0$ necessarily, as $A$ preserves the sphere. Hence, the Hamiltonian preserves the family of Gaussians $\{e^{iax^2}\sigma^{d/2}e^{-\sigma |x|^2}, a\in \R,\sigma>0\}$. What about more general $A$ such that $<A,x>=0$? And what if I add a control linear in $x$ (The obstruction should still hold I guess, e.g. in $d=2$ with $b=(1,1)$ we can do the computation explicitely: (the commutator $[A\nabla,x]=-x_2+x_1$ is proportional to another linear control in $x$) indeed the flow if a rotation of the circle hence the states $e^{ikx}$ goes into something like $e^{ik(x_1\cos\theta+x_2\sin\theta,-x_1\sin\theta +x_2\cos\theta)}$, hence Gaussians $\{e^{iax^2}e^{ibx}\sigma^{d/2}e^{-\sigma |x-c|^2}, a\in \R,\sigma>0, b,c\in\R^2\}$ are preserved or not)? }
\subsubsection{Any differentiable magnetic potential in $\R^d$ and $\T^d$}
In the second part of the article we study two additional examples of systems the form \eqref{eq:schro}, posed on $M=\T^d$ or $\R^d$, allowing for much more general magnetic perturbations in the drift term. In fact, we consider any differentiable magnetic potential.
%We shall also need an additional hypothesis on system \eqref{eq:schro}, introduced by Duca and Nersesyan in the recent work \cite{duca-nersesyan}.
%\begin{definition}
%We say that \eqref{eq:schro} satisfies the small-time approximate controllability of phases if, $\forall \varphi\in L^\infty_{\rm loc}(M,\R), \psi_0\in L^2(M,\C)$, the state $e^{i\varphi}\psi_0$ is small-time approximately reachable from $\psi_0$.
%\end{definition}
%\begin{theorem}\label{thm:general-magnetic}
%Let $M=\R^d,\T^d$ and suppose that $\Delta_A-V$ is self-adjoint. satisfy the small-time $L^2$-approximate controllability of phases. Then, system \eqref{eq:magnetic-schro} is small-time $L^2$-approximately controllable.
%\end{theorem}
%We can furnish two explicit examples of system satisfying the small-time $L^2$-approximate controllability of phases.
%\paragraph{An equation posed on $M=\T^d=\R^d/2\pi\Z^d$.}

The first example is the equation 
\begin{equation}\label{eq:torus}
\begin{cases}
i\partial_t\psi(t,x)=\!\!\Big(\!\!-\Delta_{A(x)}+V(x
)+\sum\limits_{j=1}^d ( u_{2j-1}(t)\sin +u_{2j}(t)\cos)\langle b_j , x\rangle \Big)\psi(t,x),\,\,x\in\T^d,\\
\psi(0,\cdot)=\psi_0, &
\end{cases}
\end{equation}
where 
%$(A_1,\dots,A_d)\in C^1(\T^d,\R^d), 
$V\in L^\infty(\T^d,\R), A\in C^1(\T^d,\R^d)$, and 
\begin{equation} \label{Def:ej}
b_1=(1,0,\dots,0),\quad b_2=(0,1,0,\dots,0),\quad \dots, \quad b_{d-1}=(0,\dots,0,1,0),\quad b_d=(1,\dots,1).
\end{equation}
In the one-dimensional case $d=1$, it describes the orientation in the plane of a rigid molecule subject to a drift magnetic field, controlled by the dipolar interactions with two electric fields of constant orthogonal directions and time-variable amplitudes. %It is widely used in physics and chemistry as a model for rotational molecular dynamics (see, e.g., the recent review \cite{koch} and the references therein). 
%Recently, small-time approximate controllability properties of \eqref{eq:torus} were obtained in \cite{duca-nersesyan,coron-xiang-zhang}. 

The second example is the equation 
\begin{equation}\label{eq:line}
\begin{cases}
i\partial_t\psi(t,x)=\left( -\Delta_{A(x)}+V(x)+\sum\limits_{j=1}^d u_{j}(t)x_j+u_{d+1}(t)e^{-|x|^2/2}  \right)\psi(t,x),& x \in  \R^d, \\
\psi(0,\cdot)=\psi_0, &  
\end{cases}
\end{equation}
where  $A\in C^1(\R^d,\R^d)$ and $V$ verifies \eqref{Hyp:V_transp}. It models the dipolar interaction of a quantum particle in a drift magnetic field with controls coupling to its positions $x_j$, and an additional control concentrated around the origin as a Gaussian function. Notice that such Hamiltonian, when e.g. $A,V\in L^\infty$, has continuous spectrum for all $u\in \R^{d+1}$ (see e.g. \cite[Theorem XIII.15]{rs4}). 
%Recently, small-time approximate controllability properties of (\ref{eq:line}) were obtained in \cite{duca-pozzoli}. 

\medskip

The assumption $A\in C^1(\T^d),V\in L^\infty(\T^d,\R)$ (resp. $A\in C^1(\R^d)$ and $V$ verifies \eqref{Hyp:V_transp}) guarantees the essential self-adjointess for all $u\in \R^{m}$ of the magnetic Schrödinger operators $-\Delta_A+V+\sum\limits_{j=1}^d ( u_{2j-1}\sin +u_{2j}\cos)\langle b_j , x\rangle$ with domain $C^\infty(\T^d)$ (resp. $-\Delta_A+V+\sum\limits_{j=1}^d u_{j}x_j+u_{d+1}e^{-|x|^2/2} $ with domain $C^\infty_c(\R^d)$) on $L^2(\T^d)$ (resp. $L^2(\R^d)$) (see, e.g., \cite[Theorem 5.2]{shubin} for $\T^d$ and \cite[Theorem 5.3]{shubin} for $\R^d$). Therefore, for any \\$u\in PWC([0,\infty),\R^m)$, the (unique) solutions of (\ref{eq:torus}) and (\ref{eq:line}) are well-defined and $\psi(.;u,\psi_0) \in C^0([0,T],\mathcal{S})$.
Our main second result is the following.
\begin{theorem}\label{thm:examples}
Let $A\in C^1(M,\R^d)$ where $M=\T^d,$ and $\R^d$. Systems \eqref{eq:torus} and \eqref{eq:line} are small-time approximately controllable.
\end{theorem}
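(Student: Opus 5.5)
The plan is to reduce Theorem \ref{thm:examples} to the known small-time controllability results for the non-magnetic versions of \eqref{eq:torus} and \eqref{eq:line} (Duca--Nersesyan on $\T^d$, Duca--Pozzoli on $\R^d$). The mechanism behind those results is the small-time controllability of phases. For every $\varphi$ in a suitable class (trigonometric polynomials, resp. polynomials, then by density $L^\infty$ or $L^\infty_{\rm loc}$ real functions) and every $\psi_0$, the state $e^{i\varphi}\psi_0$ is approximately reachable in arbitrarily small time. This rests on two ingredients. First, large controls on a short interval approximate the propagator $e^{-i\sum u_jW_j}$. Second, conjugating the drift by such phases produces the limits
\[
e^{i\tau^{-1/2}\varphi}e^{-i\tau H}e^{-i\tau^{-1/2}\varphi}\to e^{-i\langle \nabla\varphi,\nabla\varphi\rangle}\quad\text{as }\tau\to 0 .
\]
Iterating these limits generates phases in a growing Lie algebra that is dense. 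So the first step is to check that this Lie bracket computation survives when $-\Delta$ is replaced by $-\Delta_A$.

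For the check, I compute the conjugation of the magnetic drift directly:
\[
e^{i\lambda\varphi}(-\Delta_A+V)e^{-i\lambda\varphi}=-\Delta_A+V+\lambda\bigl(2i\langle\nabla\varphi,\nabla\rangle+i\Delta\varphi+2\langle A,\nabla\varphi\rangle\bigr)+\lambda^2|\nabla\varphi|^2 .
\]
This uses $-\Delta_A=(-i\nabla-A)^2$, so that conjugation simply shifts $-i\nabla-A$ by $\lambda\nabla\varphi$. With $\lambda=\tau^{-1/2}$ and time $\tau$, the quadratic term contributes exactly $|\nabla\varphi|^2$, which is independent of $A$. The remaining terms carry a factor $\tau$ or $\tau^{1/2}$. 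I will then prove the convergence $e^{-i\tau H_\lambda}\psi\to e^{-i|\nabla\varphi|^2}\psi$ in $L^2$ by a Trotter--Kato or Duhamel argument on a dense set of smooth (compactly supported, or Schwartz on $\R^d$) $\psi$. The essential self-adjointness on $C^\infty_c$, quoted from \cite{shubin}, makes strong resolvent convergence available on that core. The new term $2\lambda\langle A,\nabla\varphi\rangle$ is a bounded multiplication by a $C^1$ function on compact sets, times $\tau\lambda=\tau^{1/2}\to0$, so it is harmless. Hence every phase-generation step of the earlier proofs holds verbatim, and phases $e^{i\varphi}$ with $\varphi$ in the same dense class are small-time reachable.

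The second step passes from phases to full controllability, again following the earlier works. I conjugate the free evolution $e^{-i\tau(-\Delta_A+V)}$ for small time by phases $e^{\pm i\varphi/\tau}$. This approximates transport, i.e.\ composition with the flow of $\nabla\varphi$ with the Jacobian factor, up to phases. Combined with the density of reachable phases and a polar-decomposition argument ($\psi_1=|\psi_1|e^{i\arg\psi_1}$), this gives transport of moduli between positive densities and hence approximate controllability in small time. Here the $A$-dependent corrections appear at order $\tau\cdot\tau^{-1}=1$, so they do not vanish. One must check that $e^{i\varphi/\tau}e^{-i\tau(-\Delta_A)}e^{-i\varphi/\tau}$ converges to the flow of $2\nabla\varphi$ times the phase factor $e^{-i\int 2\langle A,\nabla\varphi\rangle}$ along trajectories. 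That extra phase is itself reachable, and since we can subsequently correct phases, it can be undone.

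I expect this second step to be the main obstacle. It requires a rigorous semiclassical limit for the magnetic operator with only $C^1$ potential $A$ on a non-compact domain, and control of the unbounded $|A|^2$ term at infinity. To handle it I would first work with compactly supported targets and $\varphi$ with compactly supported gradient. I would then localize $A$ by noting that the relevant dynamics stay in a compact set for short times, justified via finite propagation estimates on a dense core, and finally conclude by density.
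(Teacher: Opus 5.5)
\textbf{Gap in the second step.} Your overall architecture is the paper's. Phases come from conjugating the drift by $e^{\pm i\varphi/\sqrt{\tau}}$, where the magnetic terms carry a factor $\sqrt{\tau}$ and disappear. Gradient flows come from conjugating by $e^{\pm i\varphi/\tau}$. The residual $A$-dependent phase is removed, and the final step is taken from Beauchard--Pozzoli. But the convergence you propose to prove in the second step is false as stated. By your own conjugation formula with $\lambda=1/\tau$ and $f=2\nabla\varphi$,
\[
e^{i\varphi/\tau}e^{-i\tau(-\Delta_A+V)}e^{-i\varphi/\tau}=\exp\Bigl(i\tau(\Delta_A-V)+\mathcal{T}_f-2i\langle A,\nabla\varphi\rangle-\tfrac{i}{\tau}|\nabla\varphi|^2\Bigr).
\]
This has no limit as $\tau\to0$ in general; it is harmless only when $|\nabla\varphi|$ is constant, e.g.\ for linear $\varphi$ giving translations. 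The term $\tau^{-1}|\nabla\varphi|^2$ blows up and does not commute with $\mathcal{T}_f$, so it cannot be ``corrected subsequently'' by a phase applied afterwards.

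The missing ingredient is to cancel this term inside the exponential by a Trotter product, as in the paper's $L_{\tau,n}$. Since $e^{i|\nabla\varphi|^2/(n\tau)}$ is STAR, the operator $\bigl(e^{i|\nabla\varphi|^2/(n\tau)}e^{i\varphi/\tau}e^{-i\tau(-\Delta_A+V)/n}e^{-i\varphi/\tau}\bigr)^n$ is approximately reachable in time $\tau^+$. Because the added multiplication operator is bounded, Kato--Rellich plus the Trotter product formula show it converges strongly, as $n\to\infty$, to the exponential above with the $\tau^{-1}$ term removed. Only then do you let $\tau\to0$.

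\textbf{The anticipated obstacle is not one.} After this correction, the step you flag as the main difficulty is no harder than your first step. The self-adjoint generators differ from the limit $-i\mathcal{T}_f-2\langle A,\nabla\varphi\rangle$ by $\tau(\Delta_A-V)$, which tends to $0$ on every $\psi\in C^\infty_c$. The limit operator is essentially self-adjoint on $C^\infty_c$ because $\nabla\varphi$ is globally Lipschitz, and its group is explicit by characteristics. So the Trotter--Kato theorem gives strong convergence directly. The growth of $|A|^2$ at infinity never enters, since only compactly supported test functions are used. You need no semiclassical analysis and no localization of $A$; the finite-propagation estimates you mention are not available for Schr\"odinger flows anyway.

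\textbf{Comparison with the paper.} Your removal of the residual phase is correct and slightly more direct than the paper's Lie--Trotter splitting: you write $e^{\mathcal{T}_f-2i\langle A,\nabla\varphi\rangle}=e^{i\theta}\mathcal{L}_{\phi^1_f}$ with $\theta$ continuous, then multiply by the STAR phase $e^{-i\theta}$. On $\R^d$ the paper generates phases differently. It first obtains magnetic translations by conjugating the drift with $e^{iux_j/(2\tau)}$, then conjugates these by phases to get $e^{-i\partial_{x_j}\varphi}$, iterating from $e^{-|x|^2/2}$. Your $|\nabla\varphi|^2$ mechanism also works there, through $|\nabla(\varphi/(2s)+sx_j)|^2=s^2+\partial_{x_j}\varphi+|\nabla\varphi|^2/(4s^2)$ with $s\to\infty$. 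However, the dense class it produces is Hermite functions, not polynomials, since linear $\varphi$ only yield constants.
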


The strategy used in this paper follows geometric methods, combining Lie brackets arguments à la \emph{Agrachev-Sarychev} \cite{agrachev-sarychev} recently adapted to Schrödinger equations in the works \cite{duca-nersesyan} by A. Duca and V. Nersesyan, and \cite{coron-xiang-zhang} by J.-M. Coron, S. Xiang, and P. Zhang, and the control of diffeomorphisms through gradient vector fields developed in \cite{beauchard-Pozzoli} and \cite{beauchard-Pozzoli2} by K. Beauchard and the author. The convergences are harder to obtain here, due to the presence of the additional magnetic perturbation in the drift Schrödinger operator. Intuitively, being the magnetic perturbation $|A|^2+i({\rm div}A+2\langle A,\nabla\rangle)$ a first order differential operator, it is dominated in the Lie bracket asymptotics by the second order differential operator $-\Delta$. Exploiting such fact, we prove that it does not affect the small-time approximate controllability which holds true when $A=0$ (as shown in \cite{beauchard-Pozzoli,beauchard-Pozzoli2} by K. Beauchard and the author). 

\subsection{Structure of the article}
The article is organised as follows: in Section \ref{sec:control-notions} we introduce some control notions; in Section \ref{sec:thm:obstruction} we give a proof of Theorem \ref{thm:obstruction}; in Section \ref{sec:uniform} we prove Theorem \ref{thm:constant-magnetic}; in Section \ref{sec:gradient} we show that the control of phases implies the control of gradient vector fields; in Section \ref{sec:phase} we give a proof of Theorem \ref{thm:examples}.

\section{Some control notions}\label{sec:control-notions}

{\color{black}\begin{definition}  \label{def:L2AR}
Let $T \in \R_+$ and $L$ be a unitary operator on $L^2(M,\C)$. 
\begin{itemize}
\item $L$ is $L^2$-exactly reachable in time $T$ if, for every $\psi_0 \in  \mathcal{S}$, there exist $u \in PWC(0,T)$ and $\theta\in[0,2\pi)$ such that 
 $ \psi(T;u,\psi_0) =  e^{i\theta}L \psi_0 $. 
\item $L$ is $L^2$-approximately reachable in time $T^+$ if, for every $\psi_0 \in \mathcal{S}$ and $\varepsilon>0$, there exist $T_1 \in [T,T+\varepsilon]$, $u \in PWC(0,T_1)$ and $\theta \in [0,2\pi)$ such that $\| \psi(T_1;u,\psi_0)-  e^{i\theta}L \psi_0 \|_{L^2}<\varepsilon$.
\item $L$ is $L^2$-STAR if it is $L^2$-approximately reachable in time $0^+$.
\end{itemize}
\end{definition}

The following property was shown in \cite[Lemma 7]{beauchard-Pozzoli}, and is used extensively in the rest of the article.

\begin{lemma} \label{lem:reachable-operators-bis}
\begin{enumerate}
    \item For $j=1,2$, let $T_j \in \R_+$ and $L_j$ be a unitary operator on $L^2(M,\C)$ which is $L^2$-approximately reachable in time $T_j^+$. Then the operator $L_1 L_2$ is $L^2$-approximately reachable in time $(T_1+T_2)^+$.
    \item Let $(T_n)_{n\in\N}$ be a decreasing sequence of $\R_+$ and $T:=\lim_{n\to \infty} T_n$. For every $n \in \N$, let $L_n$ be a unitary operator on $L^2(M,\C)$ which is $L^2$-approximately reachable in time $T_n^+$. We assume $L_n$ strongly converges to $L$. Then $L$ is $L^2$-approximately reachable in time $T^+$.
\end{enumerate}
\end{lemma}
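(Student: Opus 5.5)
Both assertions follow from the definition of approximate reachability, the unitarity of all propagators, and the triangle inequality. No structural property of the equation is needed beyond the fact that the solution map $\psi_0\mapsto\psi(t;u,\psi_0)$ is unitary (hence a linear isometry) and that concatenating controls composes propagators.

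\emph{Item 1.} Fix $\psi_0\in\mathcal S$ and $\varepsilon>0$.
\begin{itemize}
\item Apply the reachability of $L_2$ to $\psi_0$ with tolerance $\varepsilon/2$. This gives $T_2'\in[T_2,T_2+\varepsilon/2]$, a control $u_2$ and a phase $\theta_2$ with $\|\psi(T_2';u_2,\psi_0)-e^{i\theta_2}L_2\psi_0\|<\varepsilon/2$.
\item Apply the reachability of $L_1$ to the state $L_2\psi_0\in\mathcal S$ with tolerance $\varepsilon/2$. This gives $T_1'$, $u_1$ and $\theta_1$.
\item Let $u$ be the concatenation, $u_2$ on $[0,T_2')$ followed by $u_1$ shifted. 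It is piecewise constant on $[0,T_1'+T_2']$, and $T_1'+T_2'\in[T_1+T_2,T_1+T_2+\varepsilon]$.
\item Write $U$ for the (unitary) propagator of $u_1$. By linearity,
\[
\psi(T_1'+T_2';u,\psi_0)=U\psi(T_2';u_2,\psi_0),\qquad U(e^{i\theta_2}L_2\psi_0)=e^{i\theta_2}U L_2\psi_0 .
\]
\item Since $U$ is an isometry,
\[
\|U\psi(T_2';u_2,\psi_0)-e^{i(\theta_1+\theta_2)}L_1L_2\psi_0\|\le \|\psi(T_2')-e^{i\theta_2}L_2\psi_0\|+\|UL_2\psi_0-e^{i\theta_1}L_1L_2\psi_0\|<\varepsilon .
\]
Reducing the phase modulo $2\pi$ concludes.
\end{itemize}

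\emph{Item 2.} Fix $\psi_0$ and $\varepsilon$.
\begin{itemize}
\item By strong convergence, choose $n$ with $\|L_n\psi_0-L\psi_0\|<\varepsilon/2$. Since $T_n\downarrow T$, also take $n$ large enough that $T_n\le T+\varepsilon/2$.
\item Use the reachability of $L_n$ with tolerance $\varepsilon/2$. This gives $T'\in[T_n,T_n+\varepsilon/2]\subset[T,T+\varepsilon]$, a control and a phase $\theta$ with error below $\varepsilon/2$ against $e^{i\theta}L_n\psi_0$.
\item The triangle inequality gives $\|\psi(T')-e^{i\theta}L\psi_0\|<\varepsilon$.
\end{itemize}

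The only delicate point is bookkeeping. The time window must stay inside $[T,T+\varepsilon]$, which requires $T_n\ge T$ (true for a decreasing sequence). In item 1, the second control must be chosen for the intermediate state $L_2\psi_0$, not for $\psi_0$. This is legitimate because reachability is assumed for every initial state in $\mathcal S$, and $L_2\psi_0\in\mathcal S$ by unitarity of $L_2$.
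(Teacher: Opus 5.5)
Your proof is correct. The paper does not reprove this lemma but imports it from \cite[Lemma 7]{beauchard-Pozzoli}, where it is established by the same elementary argument: concatenate piecewise constant controls, use linearity and isometry of the propagator, and split $\varepsilon$ in two via the triangle inequality, choosing the second control for the intermediate target $L_2\psi_0$ in item 1 and taking $n$ large enough that both $\|L_n\psi_0-L\psi_0\|$ and $T_n-T$ are small in item 2.
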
}

\begin{definition}
\label{def:Diffc0_action}
For $P\in {\rm Diff}(M)$, the unitary operator on $L^2(M,\C)$ associated with $P$ is defined by 
\begin{equation} \label{Def:LP}
\mathcal{L}_{P}\psi =  J_P^{1/2}(\psi\circ P),
\end{equation}
where $J_P := \text{det}(DP)$ is the determinant of the Jacobian matrix $DP$ of $P$. Then $\|\mathcal{L}_{P}\psi\|_{L^2}=\|\psi\|_{L^2}.$
\end{definition}
An example of unitary operator of this form in $\R^d$ is given in the following definition.
\begin{definition} 
For $\alpha \in \R^*$, the dilation $D_{\alpha}$
is the linear isometry of $L^2(\R^d,\C)$ defined by $(D_{\alpha}\psi)(x)=|\alpha|^{d/2}\psi(\alpha x)$.
\end{definition}

\begin{definition} \label{def:flow}
${\rm Vec}(M)$ denotes the space of 
globally Lipschitz
smooth vector fields on $M$.

For $f \in {\rm Vec}(M)$, $\phi_f^s$
denotes the flow associated with $f$ at time $s$:
for every $x_0 \in M$, $x(s)= 
\phi_f^s(x_0)$ is the solution of
the ODE $\dot{x}(s)=f(x(s))$ associated with the initial condition $x(0)=x_0$.
 \end{definition}

For $f \in {\rm Vec}(M), t\in \R$, then $\phi_f^t \in {\rm Diff}(M)$.

\begin{definition} \label{Def:STC}
We introduce the following small-time controllability (STC) notions for \eqref{eq:schro}:
\begin{itemize}
\item \textbf{STC of phases}: for every $\varphi \in L^\infty_{\rm loc}(M,\R)$, the operator $e^{i\varphi}$ is $L^2$-STAR.
\item For a %homogeneous 
subset $\frak{G}$ of ${\rm Vec}(M)$, 
%(i.e., $f\in \frak{G}\Rightarrow \lambda f\in \frak{G}, \forall \lambda\in \R$), 
the \textbf{STC of flows of vector fields in $\frak{G}$} 
holds if
for every $f \in \frak{G}$ and $t \in \R$, the operator $\mathcal{L}_{\phi_f^t}$ is $L^2$-STAR for \eqref{eq:schro}. 
\end{itemize}
\end{definition}

We remark that, for any $f\in {\rm Vec}(M)$, the (skew-adjoint) first-order differential operator $$\mathcal{T}_f := \langle f , \nabla \cdot \rangle + \frac{1}{2} {\rm div}(f)$$ is the generator of the $L^2$-unitary group $\{\mathcal{L}_{\phi^t_f}\}_{t\in \R}$, that is, $e^{t\mathcal{T}_f}=\mathcal{L}_{\phi^t_f}$ (thanks to the method of characteristics).

\section{Proof of Theorem \ref{thm:obstruction}}\label{sec:thm:obstruction}
Let
$${\rm R}(\psi_0)=\{\psi_1\in \mathcal{S}|\exists T\geq 0, u\in {\rm PWC}(0,T)\text{ s.t. }\psi(T;u,\psi_0)=\psi_1\}$$
be the set of reachable states from $\psi_0\in \mathcal{S}$. Notice that, for $A=b(-x_2,x_1)$ or $A=x\times b,$ 
\begin{align}
& {\rm div}(A)=0, |A|^2=|b|^2|x|^2,\label{eq:divergence-free}\\
& \langle A(x),x\rangle=0, \forall x\in\R^d \text{ (hence } |\phi_{A}^s(x)|^2=|x|^2, \forall s\in \R\text{)}.\label{eq:preserving-spheres}
 \end{align}
Relation \eqref{eq:preserving-spheres} says that the vector field $A$ is tangent to any sphere $\{|x|=a\},a>0$, hence its flow is an isometry of the euclidean space. Thanks to the Trotter-Kato product formula (see e.g. \cite[Theorem VIII.30]{rs1}) and properties \eqref{eq:divergence-free}, $\overline{{\rm R}(\psi_0)}^{L^2}$ is then equal to the $L^2$-closure of the set
$$\left\{\prod_{j=1}^N e^{it_j\Delta}e^{t_j\langle A,\nabla\rangle}e^{i(u_j+t_j|b|^2)|x|^2}\psi_0\mid t_j\geq 0,u_j\in \R, N\in\N\right\}.$$
Let 
\begin{equation} \label{def:Gauss}
\mathcal{G}:= \{ x \in \R^d \mapsto Ce^{i \theta}   a^{\frac{d}{4}} e^{-(a+ib)|x|^2}  ; \theta \in [0,2\pi), a>0, b \in \R \},
\end{equation}
$C:=(8/\pi)^{d/2}$, be the set of centred normalized Gaussian states. The propagator $e^{it|x|^2}, t\in\R,$ clearly preserves $\mathcal{G}$; also $e^{it\Delta}, t\in\R,$ preserves $\mathcal{G}$ thanks to the explicit form of the solution of the free Schrödinger equation in $\R^d$; and finally also $e^{t\langle A,\nabla\rangle}, t\in \R$ preserves $\mathcal{G}$, thanks to \eqref{eq:preserving-spheres} and the equality $(e^{t\langle A,\nabla\rangle}\psi_0)(x)=\psi_0(\phi_A^t(x)).$
Hence, 
$${\rm R}(\psi_0)\subset \mathcal{G},\quad \forall \psi_0\in  \mathcal{G}.$$
The conclusion follows by the fact that $\mathcal{G}$ is a closed subspace of the $L^2$-unit sphere $\mathcal{S}$ (see, e.g., \cite[Lemma 29]{beauchard-Pozzoli}).

\section{Proof of Theorem \ref{thm:constant-magnetic}}\label{sec:uniform}

Let us consider the subsystem
\begin{equation}\label{eq:trap_VV}
\begin{cases}
i\partial_t\psi(t,x)=(- \Delta _{A}+ V(x) + u(t)|x|^2)\psi(t,x),& (t,x) \in (0,T)\times\R^d, \\
\psi(0,\cdot)=\psi_0.&
\end{cases}
\end{equation}
In order to establish Theorem \ref{thm:constant-magnetic}, we shall need the following result.
\begin{proposition} \label{Prop:exp(isDelta)}
Let $V$ satisfy \eqref{Hyp:V_transp}, and $A\in C^1(\R^d)$ be such that $\langle A(x),x\rangle=0,\forall x\in\R^d$.
For system (\ref{eq:trap_VV}), the following operators are $L^2$-STAR:
\begin{enumerate}
    \item $e^{i \delta |x|^2}$ for every $\delta\in\R$,
    \item $D_{\alpha}$ for every $\alpha > 0$,
    \item $e^{i \sigma \Delta}$ for every $\sigma \geq 0$,
    \item $e^{i \sigma (\Delta-|x|^2)}$ for every $\sigma \geq 0$.
    \end{enumerate}
\end{proposition}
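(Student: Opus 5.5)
Each item reduces to the corresponding statement for $A=0$ (proved in \cite{beauchard-Pozzoli}), plus a check that the magnetic terms are negligible in the small-time limits, or that they are harmless because $A$ commutes with the radial structure. The key structural facts follow from $\langle A(x),x\rangle=0$. The vector field $A$ is tangent to the spheres, so the first-order operator $\langle A,\nabla\rangle$ commutes with multiplication by any radial function, in particular $|x|^2$. It also commutes with the Euler operator $x\cdot\nabla$ up to a homogeneity defect. Finally, $[\langle A,\nabla\rangle,\Delta]$ is a second-order operator with $C^0$ coefficients involving $DA$.

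\textbf{Item 1.} Apply a large constant control $u=\delta/\tau$ on $[0,\tau]$. In the interaction picture, $e^{-i\tau(-\Delta_A+V+\frac{\delta}{\tau}|x|^2)}$ converges strongly to $e^{-i\delta|x|^2}$ as $\tau\to0$, since the drift is multiplied by $\tau$. I would argue by Trotter--Kato or by the standard perturbative argument: on a dense set of smooth compactly supported $\psi_0$, compare $\psi(t)$ with $e^{-i\frac{\delta t}{\tau}|x|^2}\psi_0$ via Duhamel. The drift $-\Delta_A+V$ applied to $e^{-is|x|^2}\psi_0$ is then bounded by $C(1+|s|)^2$ for $|s|\le|\delta|$, so the error is $O(\tau)$. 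The magnetic terms enter only as first-order operators with $C^0$ coefficients on a compact support, so they are trivially bounded here. The sign of $\delta$ is free, which gives item 1.

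\textbf{Items 2 and 4.} For $A=0$ these are obtained in \cite{beauchard-Pozzoli} by conjugating $e^{i\sigma\Delta}$ and the harmonic oscillator flow through quadratic phases. I would reproduce that scheme. First, $e^{-i\tau(-\Delta_A+V+\frac{c}{\tau^2}\cdots)}$ is not available directly, so instead I use the identity
\[
e^{-i\beta|x|^2}\,\Delta\, e^{i\beta|x|^2}=\Delta+4i\beta\,(x\cdot\nabla+\tfrac d2)-4\beta^2|x|^2 .
\]
Conjugating the drift evolution on a time $\tau$ by $e^{\pm i\beta|x|^2}$ with $\beta\sim\tau^{-1/2}$ or $\tau^{-1}$ produces the generator of dilations $x\cdot\nabla+\frac d2$ dominating. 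Crucially, the same conjugation leaves $\langle A,\nabla\rangle$ unchanged, because $\langle A,\nabla\rangle|x|^2=2\langle A,x\rangle=0$; similarly $|A|^2$ and ${\rm div}A$ are untouched. Hence after rescaling time, the magnetic part appears multiplied by a vanishing factor ($\tau$ against $\tau\beta$). The limit is $e^{\pm\sigma(x\cdot\nabla+d/2)}=D_{e^{\pm\sigma}}$, which gives item 2 after Lemma~\ref{lem:reachable-operators-bis}. For item 4 I would run the same limit with a control $u=1$ rescaled by dilations: $D_\alpha^{-1}e^{-it(-\Delta_A+V+|x|^2)}D_\alpha$ is generated by $\alpha^{-2}(-\Delta)+\alpha^2|x|^2$ plus magnetic terms scaled by $\alpha^{-1}$ and $V(\cdot/\alpha)$. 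Choosing $t=\alpha^2\sigma$ with $\alpha\to0$ kills the magnetic and $V$ contributions, giving $e^{i\sigma(\Delta-|x|^2)}$ after a further adjustment with items 1--2. Item 3 then follows from item 4 and items 1--2 via the metaplectic identity expressing $e^{i\sigma\Delta}$ as a product of $e^{i\delta|x|^2}$, $D_\alpha$ and harmonic-oscillator propagators, exactly as in the case $A=0$. Strong convergence is passed to the limit by Lemma~\ref{lem:reachable-operators-bis}.

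\textbf{Main obstacle.} The hard step is the rigorous convergence in item 2/4 with $A\in C^1$ only. The rescaled magnetic term $\alpha(2i\langle A(x/\alpha)\cdot\rangle\nabla+\dots)$ need not be small uniformly, since $A(x/\alpha)$ may grow at infinity. I would first handle this on compactly supported smooth data, with finite propagation estimates (localisation in $|x|$ is preserved, since the quadratic-phase and dilation pieces preserve the radial structure). I would then use a resolvent/Trotter--Kato argument on the common core $C_c^\infty$, where all terms are controlled, and conclude by density.
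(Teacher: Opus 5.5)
Your Item 1 is fine: Duhamel on $C^\infty_c$ data is equivalent to the paper's strong-convergence-on-a-common-core argument. You also correctly see that $\langle A,x\rangle=0$ is what lets the quadratic-phase conjugation leave $\langle A,\nabla\rangle$ untouched. But Item 2 fails as written. You declare $e^{-i\tau(-\Delta_A+V+c\tau^{-2}|x|^2)}$ ``not available directly'' and conjugate the bare drift evolution instead. By your own identity, $e^{-i\beta|x|^2}e^{i\tau(\Delta_A-V)}e^{i\beta|x|^2}=\exp\bigl(i\tau(\Delta_A-V)-4\tau\beta(x\cdot\nabla+\tfrac d2)-4i\tau\beta^2|x|^2\bigr)$. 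For $\beta\sim\tau^{-1/2}$ the dilation coefficient $\tau\beta$ tends to $0$ and the limit is only a quadratic phase. For $\beta\sim\tau^{-1}$ the term $4\tau\beta^2|x|^2\sim\tau^{-1}|x|^2$ blows up. In neither regime does the dilation generator dominate. The missing ingredient is exactly the propagator you discarded. Any real constant control is admissible, so $e^{i\tau(\Delta_A-V-u|x|^2)}$ with $u=-4\beta^2\sim\tau^{-2}$ is exactly reachable in time $\tau$, and it cancels the divergent term. This is the paper's choice, $e^{i\frac{\log\alpha}{4\tau}|x|^2}e^{i\tau(\Delta_A-V+\frac{\log^2\alpha}{4\tau^2}|x|^2)}e^{-i\frac{\log\alpha}{4\tau}|x|^2}$. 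Its generator is $i\tau(\Delta_A-V)+\frac{\log\alpha}{2}(d+2\langle x,\nabla\rangle)$ once the $\langle A,x\rangle$ term is dropped.

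Item 4 is miscomputed. Take the scaling you wrote: $-\Delta$ multiplied by $\alpha^{-2}$, $t=\alpha^2\sigma$, $\alpha\to0$. Then the coefficients are evaluated at $\alpha x$, not $x/\alpha$. With the fixed control $u=1$, the rescaled Hamiltonian is $\sigma(-\Delta)+\sigma\alpha^4|x|^2+\sigma\alpha^2(V+|A|^2+i\,{\rm div}A)(\alpha x)+2i\sigma\alpha\langle A(\alpha x),\nabla\rangle$. Its limit is $-\sigma\Delta$, so you obtain $e^{i\sigma\Delta}$, not $e^{i\sigma(\Delta-|x|^2)}$; the latter would need $u=\alpha^{-4}$. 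This computation is in fact exactly the paper's proof of Item 3, via $D_{t^{1/2}}e^{i\sigma t(\Delta_A-V)}D_{t^{-1/2}}$. Item 4 then follows from Items 1 and 3 by the Lie--Trotter--Kato formula, so the metaplectic route is unnecessary.

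The same computation also removes your ``main obstacle'', which comes from dilating in the wrong direction. Take $\psi\in C^\infty_c$ supported in $B(0,R)$ and $\alpha\le1$. Only the $L^\infty(B(0,R))$ norms of $A$, ${\rm div}A$ and $V$ enter, so the perturbation is $O(\alpha)$ in $L^2$, using $\|\nabla\psi\|_{L^2}$. Strong convergence on the common core $C^\infty_c$ then suffices. Finite propagation speed fails for Schr\"odinger equations anyway, and it plays no role here.
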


\begin{proof}

The notations used to prove the Item $i$ are not valid in the proof of the Item $j \neq i$. 

\medskip

\noindent \emph{1.} Let $\delta\in\R^*$. For $\tau>0$, the operator $\tau (\Delta_A-V)+ \delta |x|^2$ is essentially self-adjoint on $C^{\infty}_c(\R^d,\C)$ (we apply e.g. \cite[Theorem 5.3]{shubin} with $Q(x)=|x|^2+1$), thus its closure $L_{\tau}$ is self-adjoint. 
The operator $L_{0}:=\delta |x|^2$ is self-adjoint on $D(A_0):=\{ \psi \in L^2(\R^d,\C) ; |x|^2 \psi \in L^2(\R^d,\C) \}$. $C^{\infty}_c$ is a common core to $L_0$ and $L_{\tau}$. For any $\psi \in C^{\infty}_c(\R^d,\C)$,
$\|(L_{\tau}-L_0)\psi\|_{L^2} = \tau \| (\Delta_A-V) \psi \|_{L^2} \to 0$ as $\tau \to 0$. Thus, for every
$\psi \in L^2(\R^d,\C)$ 
$\|e^{i L_{\tau}} \psi - e^{i L_0} \psi \|_{L^2} \rightarrow 0$ as $\tau \to 0$. 
Moreover, for every $\tau>0$, the operator $e^{i L_{\tau}}$ is $L^2$-exactly reachable in time $\tau$ because associated with the constant control $u=-\delta/\tau$.
Hence, the operator 
$e^{i L_0}$ is $L^2$-STAR.

\medskip

\noindent \emph{2.} Let $\alpha>0$. For every $\tau>0$, the operator
$$L_{\tau} := e^{i\frac{\log(\alpha) |x|^2}{4\tau}}
e^{i\tau\left(\Delta_A-V+\frac{\log^2(\alpha) |x|^2}{4\tau^2}\right)}
e^{-i\frac{\log(\alpha) |x|^2}{4\tau}}$$
is $L^2$ approximately reachable in time $\tau^+$, 
by Item 1.

\medskip

\noindent \emph{Step 1: We prove that, for every $\tau>0$,
$$L_{\tau} = \exp\left( i\tau(\Delta_A-V)+\frac{\log(\alpha)}{2}\left(d+2\la x,\nabla\ra\right)
 \right). $$}
The operator $\tau (\Delta_A-V+ \log^2(\alpha) |x|^2/ (4\tau^2))$ is essentially self-adjoint on $C^\infty_c(\R^d,\C)$, thus its closure $A_{\tau}$ is self-adjoint. The operator $B=\log(\alpha) |x|^2 / (4\tau)$ is self-adjoint on $D(B):=\{ \psi \in L^2(\R^d,\C) ; |x|^2 \psi \in L^2(\R^d,\C) \}$. $C^{\infty}_c(\R^d,\C)$ is a core of $A_{\tau}$ and $e^{iB}$ is an isomorphism of $C^{\infty}_c(\R^d,\C)$. Thus, we have
$$L_{\tau}= \exp \left( i\tau
e^{i\frac{\log(\alpha) |x|^2}{4\tau}}
\left(\Delta_A-V+\frac{\log^2(\alpha) |x|^2}{4\tau^2}\right)
e^{-i\frac{\log(\alpha) |x|^2}{4\tau}}
\right).$$
Moreover, direct computations prove
\begin{align} 
i\tau
e^{i\frac{\log(\alpha) |x|^2}{4\tau}}
\left(\Delta_A-V+\frac{\log^2(\alpha) |x|^2}{4\tau^2}\right)
e^{-i\frac{\log(\alpha) |x|^2}{4\tau}}
=&
i \tau(\Delta_A-V)+\frac{\log(\alpha)}{2}\left(d+2\la x,\nabla\ra\right) \nonumber\\
&-\frac{i\log^2(\alpha)}{2}\langle A,x\rangle \nonumber\\
=&i \tau(\Delta_A-V)+\frac{\log(\alpha)}{2}\left(d+2\la x,\nabla\ra\right),\label{log(Ltau)} 
\end{align}
since $\langle A,x\rangle=0$.
 
\medskip

\noindent \emph{Step 2: We prove that, for every $\psi \in L^2(\R^d,\C)$, $\|(L_{\tau}-D_{\alpha})\psi\|_{L^2} \to 0$ as 
$\tau \to 0$.}
By (\ref{log(Ltau)}),
for $\tau>0$,
the operator $\tau(\Delta_A-V)-i (d+2\la x,\nabla\ra)\log(\alpha)/2$ is essentially self-adjoint on $C^\infty_c(\R^d,\C)$, thus its closure $B_{\tau}$ is self-adjoint. The operator 
$B_0:=-i (d+2\la x,\nabla\ra)\log(\alpha)/2$ is self-adjoint on $D(B_0):=\{ \psi \in L^2(\R^d,\C) ; \langle x , \nabla \psi \rangle \in L^2(\R^d,\C)  \}$. $C^{\infty}_c(\R^d,\C)$ is a common core of $B_{\tau}$ and $B_0$. For every $\psi \in C^\infty_c(\R^d,\C)$,
$\| (B_{\tau}-B_0) \psi \|_{L^2} = \tau \|(\Delta-V) \psi \|_{L^2} \to 0$ as $\tau \to 0$. 
Thus, by Step 1, for every $\psi \in L^2(\R^d,\C)$,
$$\| (L_{\tau}-D_{\alpha})\psi\|_{L^2} = \| (e^{iB_{\tau}} - e^{i B_0} ) \psi \|_{L^2}
\underset{\tau \to 0}{\longrightarrow} 0.$$
Finally, $D_{\alpha}$ is $L^2$-STAR.

\medskip

\noindent \emph{3.} Let $\sigma >0$. For every $t>0$, the operator
$$L_{t} := D_{t^{1/2}}e^{i\sigma t(\Delta_A-V)}D_{t^{-1/2}} $$
is $L^2$-approximately reachable in time $(\sigma t)^+$, by Item \emph{2}. Moreover,
$$L_t=\exp(D_{t^{1/2}}i\sigma t (\Delta_A-V)D_{t^{-1/2}}).$$ 
Then, for every $\psi\in C^\infty_c(\R^d,\C)$, explicit dilations and differentiations give the action of the generator: 
\begin{align}
&D_{t^{1/2}}i\sigma t (\Delta_A-V)D_{t^{-1/2}}\psi\nonumber\\
=&i\sigma(\Delta-tV(t^{1/2}x)-t|A|^2(t^{1/2}x)-it({\rm div}A)(t^{1/2}x)-2it^{1/2}\langle A(t^{1/2}x),\nabla\rangle)\psi.\label{eq:rescaled}
\end{align}
For every $t>0$, the operator defined in \eqref{eq:rescaled} is essentially self-adjoint on $C^{\infty}_c(\R^d,\C)$ thus its closure $A_t$ is self-adjoint.
The operator $A_0:=\sigma \Delta$ is self-adjoint on $D(A_0):=H^2(\R^d,\C)$. $C^{\infty}_c(\R^d,\C)$ is a common core to $A_0$ and $A_{t}$. Let $\psi \in C^\infty_c(\R^d,\C)$ and $R>0$ such that $\text{supp}(\psi) \subset K:=B_{\R^d}(0,R)$. We have
\begin{align*}
&\|(A_{t}-A_0)\psi\|^2_{L^2}   \\
=&\| (tV(t^{1/2}\cdot)+t|A|^2(t^{1/2}\cdot)+it({\rm div}A)(t^{1/2}\cdot)+2it^{1/2}\langle A(t^{1/2}\cdot),\nabla\rangle)\psi \|^2_{L^2}  \\
=& \int_{K} |[tV(t^{1/2} x)+t|A|^2(t^{1/2}x)+it({\rm div}A)(t^{1/2}x)] \psi(x)+2it^{1/2}\langle A(t^{1/2}x),\nabla\psi(x)\rangle)|^2 dx 
\\  \leq&
ct  \int_{K}  |\psi(x)|^2 dx
+c't^{1/2}\int_{K}|\nabla\psi(x)|^2 dx
 \underset{t \to 0}{\longrightarrow} 0,
\end{align*}
where $c,c'$ are some positive constants (thanks to the fact that $A,V\in L^\infty_{\rm loc}$). This convergence proves that, 
for every $\psi \in L^2(\R^d,\C)$,
$\|(L_{t} - e^{i\sigma \Delta})\psi  \|_{L^2} 
=\| (e^{i A_t} - e^{i A_0}) \psi \|_{L^2}
\underset{t \to 0}{\longrightarrow} 0$; hence, $e^{i \sigma \Delta}$ is $L^2$-STAR.

\medskip

\noindent \emph{4.} For every $n \in \N^*$, the operator
$$L_n := \left( e^{i \frac{\sigma}{n} \Delta} e^{-i \frac{\sigma}{n}|x|^2} \right)^n $$
is $L^2$-STAR by Items 1 and 3.
The operator $A:=\sigma \Delta$ is self-adjoint on $D(A):=H^2(\R^d,\C)$. The operator $B:=-\sigma |x|^2$ is self-adjoint on $D(B):=\{ \psi \in L^2(\R^d,\C) ; |x|^2 \psi \in L^2(\R^d,\C) \}$. The operator $A+B=\sigma(\Delta-|x|^2)$ is self-adjoint on $D(A) \cap D(B)$. Thus, by Lie-Trotter-Kato product formula, for every $\psi \in L^2(\R^d,\C)$,
$\| (L_n-e^{i \sigma(\Delta-|x|^2)})\psi \|_{L^2} =
\| (e^{iA/n}e^{iB/n})^n \psi - e^{i(A+B)}\psi  \|_{L^2} \to 0 $ as $\tau \to 0$. This proves that $e^{i \sigma(\Delta-|x|^2)}$ is $L^2$-STAR.
\end{proof}

Theorem \ref{thm:constant-magnetic} then follows exactly as in \cite{beauchard-Pozzoli}.

 \section{Control of flows of gradient vector fields}\label{sec:gradient}

 In this section, we prove the STC of flows of gradient vector fields as a consequence of the STC of phases.
\begin{proposition}\label{prop:gradient-control}
Let $M=\T^d$ (resp. $M=\R^d$), $V,W_1,\dots,W_m\in L^\infty(\T^d), A\in C^1(\T^d)$ (resp. $A\in C^1(\R^d)$ and $V,W_1,\dots,W_m$ satisfy \eqref{Hyp:V_transp}). For system \eqref{eq:schro}, the STC of phases implies the STC of vector fields in $\frak{G}$, where $\frak{G}$ is the following subset of ${\rm Vec}(M)$ (made of gradient vector fields)
\begin{equation} \label{def:G_gothique}
    \frak{G}\!\!:=\!\!\{\nabla \varphi\!\mid\! \varphi\in C^\infty (M,\R) \text{ and either } \varphi \in  L^2(M,\R),\nabla\varphi \in W^{1,\infty}\cap L^4(M),  \text{or }  \nabla\varphi=\text{const.} \},
\end{equation}
\end{proposition}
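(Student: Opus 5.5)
The idea is a conjugation of the free drift by a large phase. I will show that $\mathcal{L}_{\phi^t_{\nabla\varphi}}=e^{t\mathcal{T}_{\nabla\varphi}}$ is $L^2$-STAR. The operator $\mathcal{T}_{\nabla\varphi}$ is obtained as the $\tau\to0$ limit of
$$e^{-i\varphi/\tau}\, i\tau(\Delta_A-V)\, e^{i\varphi/\tau}.$$
By STC of phases, $e^{\pm i\varphi/\tau}$ is $L^2$-STAR. The drift $e^{i\tau s(\Delta_A-V)}$ with zero control is exactly reachable in time $\tau s$. Hence, by Lemma \ref{lem:reachable-operators-bis}(1), the conjugated propagator
$$L_\tau:=e^{-i\varphi/(2\tau)}\,e^{i\tau(\Delta_A-V)}\,e^{i\varphi/(2\tau)}$$
(the factor $1/2$ is adjusted below) is $L^2$-approximately reachable in time $\tau^+$. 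By Lemma \ref{lem:reachable-operators-bis}(2) it then suffices to show that $L_\tau\to e^{\mathcal{T}_{\nabla\varphi}}$ strongly as $\tau\to0$. Replacing $\varphi$ by $t\varphi$, and using the fact that for $t<0$ the phase may be changed to $-\varphi$, this gives the result for all $t\in\R$.

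To carry this out, I will first compute the conjugated generator on $C^\infty_c$ (or $C^\infty(\T^d)$). Writing $\Delta_A=(\nabla-iA)^2$, so that $-\Delta_A=-\Delta+|A|^2+i({\rm div}A+2\langle A,\nabla\rangle)$, and conjugating by $e^{i\theta\varphi}$ with $\theta=1/(2\tau)$, one gets $\nabla\mapsto\nabla+i\theta\nabla\varphi$. Therefore
$$i\tau\, e^{-i\theta\varphi}(\Delta_A-V)e^{i\theta\varphi}
= i\tau(\Delta_A-V) - \langle\nabla\varphi,\nabla\rangle -\tfrac12\Delta\varphi - \tfrac{i}{4\tau}|\nabla\varphi|^2 + \langle A,\nabla\varphi\rangle.$$
Two terms are not of the right form. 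The term $-\tfrac{i}{4\tau}|\nabla\varphi|^2$ is a pure phase, and I absorb it by an additional STAR phase: I insert $e^{i|\nabla\varphi|^2/(4\tau)}$, or more precisely use a Trotter splitting of it. The cleanest route is to prepend the phase $\frac{1}{4\tau}|\nabla\varphi|^2$, which is reachable by STC of phases, and compare via Trotter/Kato. The term $\langle A,\nabla\varphi\rangle$ is real-valued and of order $O(1)$, coming from the cross term $2i\langle A,\nabla\rangle$ acting on the phase. It is also a phase multiplier, and it is removed by one more STAR phase $e^{-i\langle A,\nabla\varphi\rangle}$, again through a Trotter product. After these corrections, the generator reads $B_\tau=i\tau(\Delta_A-V)+B_0$ with $B_0=-\mathcal{T}_{\nabla\varphi}$, up to sign conventions which I fix by choosing the sign of $\varphi$. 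I will then apply the same abstract argument as in Proposition \ref{Prop:exp(isDelta)}, Item 2: $C^\infty_c$ is a common core, and $\|(B_\tau-B_0)\psi\|\le\tau\|(\Delta_A-V)\psi\|\to0$, which gives strong resolvent convergence and hence strong convergence of the unitary groups (Trotter–Kato). This step needs the assumptions in $\frak{G}$: $\nabla\varphi\in W^{1,\infty}$ ensures that $\mathcal{T}_{\nabla\varphi}$ is essentially self-adjoint (skew) on $C^\infty_c$, and that the conjugated operators $B_\tau$ are essentially self-adjoint. The conditions $\varphi\in L^2$ and $\nabla\varphi\in L^4$ make $e^{i\varphi}-1$ and the phase $|\nabla\varphi|^2$ meaningful as $L^2_{\rm loc}$ objects for the phase-control hypothesis. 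When $\nabla\varphi$ is constant, the torus case is handled by $e^{i\langle c,x\rangle}$, which is well defined on $\T^d$ only for suitable $c$; more generally one uses translations generated directly by the same computation.

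The main obstacle is the rigorous handling of the several noncommuting pieces simultaneously. The operator $B_\tau$ contains the unbounded magnetic first-order term, and the phase corrections $\frac{1}{4\tau}|\nabla\varphi|^2$ are singular as $\tau\to0$, so they cannot simply be added to the generator. I will therefore do the algebra in a different order. First I conjugate by $e^{i\varphi/(2\tau)}$ the operator $\tau(\Delta_A-V)-\frac{1}{4\tau}|\nabla\varphi|^2+\langle A,\nabla\varphi\rangle$. This operator is itself approximately reachable in time $\tau^+$: the last two terms are bounded or subquadratic multiplication operators, and a Trotter product of the drift with phases is controlled as in Proposition \ref{Prop:exp(isDelta)}, Item 4, using Lemma \ref{lem:reachable-operators-bis}. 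The conjugation is then exact, since $e^{i\varphi/(2\tau)}$ preserves the core, and yields exactly $i\tau(\Delta_A-V)+\mathcal{T}_{\nabla\varphi}$. Self-adjointness of these intermediate operators uses $A\in C^1$ and \cite[Theorem 5.2–5.3]{shubin}, and the vanishing of the $\tau$-term on the core is straightforward. The magnetic contribution survives only in the harmless real phase $\langle A,\nabla\varphi\rangle$, which reflects the heuristic that the first-order magnetic term is dominated by $-\Delta$.
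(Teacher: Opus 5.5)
\textbf{Verdict: genuine gap on $\R^d$.} Your mechanism matches the paper's. You conjugate the drift by the large phase $e^{\pm i\varphi/(2\tau)}$, cancel the singular term $\frac{1}{4\tau}|\nabla\varphi|^2$ by a Trotter product with a bounded STAR phase, and let $\tau\to0$ on the common core $C^\infty_c$. The difference, and the problem, is \emph{when} you remove the magnetic cross term. You absorb $\langle A,\nabla\varphi\rangle$ at positive $\tau$. You therefore need $\tau(\Delta_A-V)+\frac{1}{4\tau}|\nabla\varphi|^2\mp\langle A,\nabla\varphi\rangle$ to be essentially self-adjoint on $C^\infty_c$; equivalently, after the exact phase conjugation, $B_\tau=i\tau(\Delta_A-V)\pm\mathcal{T}_{\nabla\varphi}$ must be essentially skew-adjoint. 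You need this both to realize the operator as a Trotter limit and to run the common-core convergence.

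Your justification, that $\langle A,\nabla\varphi\rangle$ is ``bounded or subquadratic'', holds on $\T^d$ but fails on $\R^d$. The proposition assumes only $A\in C^1(\R^d)$, with no growth condition, and $\nabla\varphi$ need not decay (constant gradients are allowed). Take $d=1$, $V=0$, $A(x)=x^4$, $\varphi(x)=\pm x$. After the gauge transformation $e^{ix^5/5}$, which preserves $C^\infty_c$, your intermediate operator becomes $\tau\partial_x^2\pm x^4$ plus a constant. For one of the two signs this is minus the Hamiltonian $-\tau\partial_x^2-x^4$. That operator is in the limit-circle case at $\pm\infty$ and is not essentially self-adjoint on $C^\infty_c(\R)$. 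You need both signs of $\varphi$ to reach $t>0$ and $t<0$, and STAR is not closed under inversion. So the argument breaks for translations in one direction.

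\textbf{How the paper avoids this.} It uses the opposite ordering. At positive $\tau$ it adds only the bounded phase $|\nabla\varphi|^2/\tau$, so the generator is a phase-conjugate of $\tau(\Delta_A-V)+{}$bounded and is automatically self-adjoint. The magnetic cross term is left in the generator. The $\tau\to0$ limit is then $e^{\mathcal{T}_f+ig}$ with $f=2\nabla\varphi$ and $g$ a real multiple of $\langle A,\nabla\varphi\rangle$. This generator is a transport operator plus a real continuous potential, hence essentially skew-adjoint by characteristics, since $\nabla\varphi$ is globally Lipschitz. Only afterwards is the phase removed: the paper uses a Trotter product with $e^{-ig}$. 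Alternatively, the characteristic formula gives $e^{\mathcal{T}_f+ig}=e^{iG}e^{\mathcal{T}_f}$ with $G=\int_0^1 g\circ\phi^s_f\,ds$ continuous, so you can simply compose with the STAR phase $e^{-iG}$ via Lemma \ref{lem:reachable-operators-bis}. Adopting this ordering repairs your proof.

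\textbf{Minor slips.}
\begin{itemize}
\item The cross term in your conjugated generator should be $i\langle A,\nabla\varphi\rangle$, not $\langle A,\nabla\varphi\rangle$.
\item Conjugation by $e^{\pm i\varphi/(2\tau)}$ always produces $-\frac{1}{4\tau}|\nabla\varphi|^2$. The cancelling correction must therefore enter with a plus sign, so the operator in your last paragraph has the wrong sign on that term.
\item On $\T^d$ a smooth $\varphi$ with constant gradient has $\nabla\varphi=0$, so your remark about that case is unnecessary.
\end{itemize}
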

\begin{proof}
Let $f  \in \frak{G}$ with $f:=2 \nabla \varphi$.
Let $n \in \N^*$ and $\tau>0$. The operator
$$
L_{\tau,n}:= 
\left( 
e^{i \frac{|\nabla \varphi|^2}{n\tau}  }
e^{i  \frac{\varphi}{\tau} }
e^{i\frac{\tau}{n}(\Delta_A-V)}
e^{-i \frac{\varphi}{\tau} }
\right)^{n}
$$
is $L^2$-approximately reachable in time $\tau^+$.

\medskip

\noindent \emph{Step 1: We prove that
\begin{equation} \label{Step1/Ltaun}
L_{\tau,n} =
\left( 
e^{ i \frac{|\nabla \varphi|^2}{n \tau }  }
\exp \left( i\frac{\tau}{n}\,
e^{i \frac{\varphi}{\tau}  }
(\Delta_A-V)
e^{-i \frac{\varphi}{\tau} }
\right)
\right)^{n}.
\end{equation}}
The operator $(\Delta_A-V)$ is essentially self-adjoint on $C^{\infty}_c(M,\C)$ thanks to the assumptions on $A,V$ (see, e.g., \cite[Theorem 5.2]{shubin} for $\T^d$ and \cite[Theorem 5.3]{shubin} for $\R^d$), thus its closure $A$ is self-adjoint. The operator $B:=\varphi/\tau$ is self-adjoint on $D(B):=\{ \psi \in L^2(M,\C) ; \varphi \psi \in L^2(M,\C) \}$.  The operator $e^{i B}$ is an isomorphism of $C^{\infty}_c(M,\R)$ because $\varphi \in C^{\infty}(M)$. 
Thus, (\ref{Step1/Ltaun}) is proved.

\medskip

\noindent \emph{Step 2: We prove that $L_{\tau}$ is $L^2$-reachable in time $\tau^+$, where}
$$L_{\tau}:= 
%\exp \left( 
%i \tau (\Delta-V)+2 \langle \nabla \varphi , \nabla \rangle + \Delta %\varphi 
%\right)= 
\exp \left( 
i \tau (\Delta_A-V) + \mathcal{T}_f+2i\langle A,\nabla\varphi\rangle
\right). $$
The operator 
$e^{i \frac{\varphi}{\tau}  }
(\Delta_A-V)
e^{-i \frac{\varphi}{\tau} }$
is essentially self-adjoint on $C^{\infty}_c(M,\C)$ by hypothesis, thus its closure $A_1$ is self-adjoint. The multiplicative operator $B_1:=|\nabla \varphi|^2/\tau$ is essentially self-adjoint on $C^\infty_c(M,\C)$ because $|\nabla \varphi|\in L^\infty(M,\R)$. Hence, $A_1+B_1$ is self-adjoint on $D(A_1)$ because $B_1$ is bounded (we can thus apply Kato-Rellich Theorem). Thus, 
for every $\psi \in L^2(M,\C)$,
$\| (L_{\tau,n}-L_{\tau}')\psi \|_{L^2} \underset{n \to \infty}{\longrightarrow} 0$
where
$$L_{\tau}':=\exp \left( i  \left\{  \frac{|\nabla \varphi|^2}{\tau}  + \tau 
e^{i \frac{\varphi}{\tau}  }
(\Delta_A-V)
e^{-i \frac{\varphi}{\tau}}
  \right\} \right),
  $$
  and direct computations prove that
\begin{equation} \label{Step2:Log}
      \tau 
e^{i \frac{\varphi}{\tau}  }
(\Delta_A-V)
e^{-i \frac{\varphi}{\tau}}
+ \frac{|\nabla \varphi|^2}{\tau} 
=
\tau (\Delta-V) - i \mathcal{T}_f+2\langle A,\nabla\varphi\rangle,
\end{equation}
thus $L_{\tau}'=L_{\tau}$. The operator $L_{\tau}$ is thus $L^2$-approximately reachable in time $\tau^+$. 

\medskip

\noindent \emph{Step 3: We prove that the operator 
$e^{(\mathcal{T}_f+2i\langle A,\nabla\varphi\rangle)}$ is $L^2$-STAR.}
The operator (\ref{Step2:Log}) defined on $C^{\infty}_c(M,\C)$ has a self-adjoint closure $A_{\tau}$ (we apply Kato-Rellich Theorem with $A=\tau 
e^{i \frac{\varphi}{\tau}  }
(\Delta_A-V)
e^{-i \frac{\varphi}{\tau}}
+ \frac{|\nabla \varphi|^2}{\tau}$ and the bounded operator $B=-\frac{|\nabla \varphi|^2}{\tau}$).
The operator
$A_0:=- i \mathcal{T}_f+2\langle A,\nabla\varphi\rangle$
is self-adjoint because $\nabla \varphi$ is globally Lipschitz continuous: indeed, the method of characteristics give the explicit formula (see also \cite{chernoff})
$$\left( e^{t (\mathcal{T}_f+2i\langle A,\nabla\varphi\rangle)} \psi \right)(x)= 
\psi( 
\phi_f^t(x)
%\mathfrak{e}^{tf} x 
) e^{\frac{1}{2} \int_0^t {\rm div} f( 
\phi_f^s(x))
%\frak{e}^{sf} x
+2i \langle A,\nabla \varphi\rangle(\phi_f^s(x)) ds}.$$
Moreover, $C^{\infty}_c(M,\C)$ is a common core of $A_{\tau}$ and $A_0$.
%because
%$\nabla \varphi \in L^{\infty}(\R^d,\R)$.
For every $\psi \in C^\infty_c(M,\C)$,
$\| (A_{\tau}-A_0) \psi \|_{L^2} =
\tau \left\| (\Delta_A-V)\psi \right\|_{L^2}    
\to 0$ as $\tau \to 0$.
Thus, for every $\psi \in L^2(M,\C)$,
$\| 
(L_{\tau} - e^{ \mathcal{T}_f +2i\langle A,\nabla\varphi\rangle}) \psi
\|_{L^2} 
= \| (e^{i A_{\tau}} - e^{i A_0}) \psi \|_{L^2}
\to 0$ as $\tau \to 0$.
Finally, the operator $e^{\mathcal{T}_f+2i\langle A,\nabla\varphi\rangle}$ is $L^2$-STAR.  

\medskip

\noindent \emph{Step 3: We prove that the operator 
$e^{\mathcal{T}_f}$ is $L^2$-STAR.} Both the operators $e^{\mathcal{T}_f+2i\langle A,\nabla\varphi\rangle}$ and $e^{-2i\langle A,\nabla\varphi\rangle}$ are STAC (indeed $\langle A,\nabla\varphi\rangle\in L^\infty_{loc}$ being $A\in C^1$ and $\nabla\varphi\in L^\infty$). Moreover, both the operators $-i\mathcal{T}_f+2\langle A,\nabla\varphi\rangle$ and $-2\langle A,\nabla\varphi\rangle$ are essentially self-adjoint on $C^\infty_c$, and their sum $-i\mathcal{T}_f$ is essentially self-adjoint on $C^\infty_c$. By Lie-Trotter-Kato product formula, $e^{\mathcal{T}_f}$ is STAC.

\end{proof}

\section{Proof of Theorem \ref{thm:examples}} \label{sec:phase}

In this section, we prove that systems \eqref{eq:torus} and \eqref{eq:line} satisfy the small-time approximate controllability of phases.

%---------------------------------------------------------------
\subsection{On $\T^d$}

\begin{proposition} \label{Prop:DN}
Let $V \in L^\infty(\T^d,\R)$.
System \eqref{eq:torus} satisfies the following property:
for every $\varphi \in L^2(\T^d,\R)$,
the operator
$e^{i \varphi}$ is $L^2$-STAR.
\end{proposition}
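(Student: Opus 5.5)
Following the Agrachev--Sarychev strategy as adapted by Duca--Nersesyan, we will saturate the set of reachable phases with iterated commutators, working throughout with the controlled potentials
\[
\mathcal{H}_0:=\mathrm{span}\{\sin\langle b_j,x\rangle,\cos\langle b_j,x\rangle,\ j=1,\dots,d\}.
\]
The argument has three steps.

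\emph{Step 1: phases in $\mathcal{H}_0$.} For $\varphi\in\mathcal{H}_0$, $e^{i\varphi}$ is $L^2$-STAR. This follows exactly as in Item 1 of Proposition \ref{Prop:exp(isDelta)}. Apply the constant control $u=-\varphi/\tau$ on $[0,\tau]$. The generator $\tau(\Delta_A-V)-\varphi$ converges to $-\varphi$ on the common core $C^\infty(\T^d)$, with error $\tau\|(\Delta_A-V)\psi\|_{L^2}\to 0$. The Trotter--Kato convergence theorem for self-adjoint operators with a common core then gives strong convergence of the unitary groups.

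\emph{Step 2: enlarging the phase set by one bracket.} Let $\mathcal{H}$ be a subspace of smooth potentials such that $e^{i\varphi}$ is $L^2$-STAR for all $\varphi\in\mathcal{H}$, and fix $\varphi\in\mathcal{H}$, $\theta\in\mathcal{H}_0$. We conjugate the short free evolution:
\[
e^{-i\theta/\tau^{1/2}}\,e^{i\tau(\Delta_A-V)}\,e^{i\theta/\tau^{1/2}}
=\exp\Bigl(i\tau\, e^{-i\theta/\tau^{1/2}}(\Delta_A-V)e^{i\theta/\tau^{1/2}}\Bigr).
\]
This operator is approximately reachable in time $\tau^+$ by Step 1 and Lemma \ref{lem:reachable-operators-bis}. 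Expanding the conjugated generator gives
\[
i\tau(\Delta_A-V)\;-\;2\tau^{1/2}\mathcal{T}_{\nabla\theta}\;-\;i|\nabla\theta|^2\;+\;2i\tau^{1/2}\langle A,\nabla\theta\rangle .
\]
Here we use $-\Delta_A=-\Delta+|A|^2+i(\mathrm{div}A+2\langle A,\nabla\rangle)$, and the factor $e^{i\theta/\tau^{1/2}}$ produces the term $2\langle A,\nabla\theta\rangle\tau^{-1/2}$ inside $\Delta_A$. Every term except $-i|\nabla\theta|^2$ carries a positive power of $\tau$ in front of an operator of order at most $2$. Hence on the core $C^\infty(\T^d)$ the generator converges to $-i|\nabla\theta|^2$, and $e^{-i|\nabla\theta|^2}$ is $L^2$-STAR. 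The magnetic contributions are first order with a factor $\tau^{1/2}$, so they vanish in the limit; this is exactly where the magnetic drift is dominated.

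We then combine with phases already available. Scaling $\theta$ by $\lambda$ and composing, using Lemma \ref{lem:reachable-operators-bis}, with $e^{i\varphi}$ for $\varphi\in\mathcal{H}$ (a group that commutes with these multiplications), we obtain that $e^{i\psi}$ is STAR for all $\psi$ in
\[
\mathcal{H}_{1}:=\mathrm{span}\{\varphi,\ |\nabla\theta|^2:\ \varphi\in\mathcal{H},\ \theta\in\mathcal{H}_0\}.
\]
The minus sign is harmless: scaling and inverses give both signs in the limit, since the set of STAR multiplications is a closed group.

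\emph{Step 3: saturation and density.} Iterating Step 2 yields phases in the increasing spaces $\mathcal{H}_k$. By the trigonometric product formulas, $|\nabla(\sin\langle b_j,x\rangle+\cos\langle b_l,x\rangle)|^2$ generates $\cos\langle b_j\pm b_l,x\rangle$ and $\sin\langle b_j\pm b_l,x\rangle$, together with frequency doubling. The choice of $b_1,\dots,b_d$ in \eqref{Def:ej} ensures that the union $\mathcal{H}_\infty=\bigcup_k\mathcal{H}_k$ contains every trigonometric polynomial; this is the purely algebraic computation of Duca--Nersesyan, and it does not involve $A$. Finally, for $\varphi\in L^2(\T^d,\R)$ take trigonometric polynomials $\varphi_n\to\varphi$ in $L^2$, passing to a subsequence converging a.e. Then $e^{i\varphi_n}\psi\to e^{i\varphi}\psi$ in $L^2$ by dominated convergence, and Lemma \ref{lem:reachable-operators-bis}(2) shows that $e^{i\varphi}$ is $L^2$-STAR.

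\emph{Main obstacle.} The delicate point is the convergence in Step 2. One needs $C^\infty(\T^d)$ to be a common core for the rescaled generators, whose essential self-adjointness follows from \cite[Theorem 5.2]{shubin} since $A\in C^1$. One also needs the error terms to tend to zero on each smooth $\psi$: they are bounded by $\tau\|(\Delta_A-V)\psi\|_{L^2}+C\tau^{1/2}\bigl(\|\nabla\psi\|_{L^2}+\|\psi\|_{L^2}\bigr)$, with $C$ depending on $\|A\|_{C^1}$ and $\|\theta\|_{C^2}$. I would first check this explicit expansion carefully, including the $\mathrm{div}A$ terms.
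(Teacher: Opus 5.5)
\textbf{There is a genuine gap: as written, your iteration stalls after one step.} Your Steps 1 and 2 follow the paper's mechanism: constant controls for $\mathcal{H}_0$, then conjugating $e^{i\tau(\Delta_A-V)}$ by $e^{\pm i\theta/\sqrt{\tau}}$ to extract $e^{-i|\nabla\theta|^2}$. Your expansion of the conjugated generator, including the $2i\tau^{1/2}\langle A,\nabla\theta\rangle$ term, is correct. The problem is that in Step 2 the conjugating phase is always taken in $\mathcal{H}_0$, and you set $\mathcal{H}_1:=\mathrm{span}\{\varphi,|\nabla\theta|^2:\varphi\in\mathcal{H},\theta\in\mathcal{H}_0\}$. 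Iterating this only ever adds the same set $\{|\nabla\theta|^2:\theta\in\mathcal{H}_0\}$, so $\mathcal{H}_k=\mathcal{H}_1$ for all $k\geq 1$.

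Concretely, for $d=1$ one has $\mathcal{H}_0=\mathrm{span}\{\sin x,\cos x\}$ and $|\partial_x(a\sin x+b\cos x)|^2=\frac{a^2+b^2}{2}+\frac{a^2-b^2}{2}\cos 2x-ab\sin 2x$. So all your spaces lie in $\mathrm{span}\{1,\sin x,\cos x,\sin 2x,\cos 2x\}$, which is not dense. The frequencies $b_j\pm b_l$ and $2b_j$ you list in Step 3 are exactly this single layer. The Duca--Nersesyan saturation needs the newly obtained potentials fed back in as conjugating phases.

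The paper's Step 2 is stated accordingly: if $\varphi\in C^2$ and $e^{i\lambda\varphi}$ is $L^2$-STAR for every $\lambda\in\R$, then $e^{-i|\nabla\varphi|^2}$ is $L^2$-STAR. Its $\mathcal{H}_j$ is then built from $|\nabla\varphi_k|^2$ with $\varphi_k\in\mathcal{H}_{j-1}$. This fix costs you nothing. For $\theta$ in the current space $\mathcal{H}$, which is a vector space of trigonometric polynomials, both $e^{\pm i\theta/\sqrt{\tau}}$ are $L^2$-STAR. Lemma \ref{lem:reachable-operators-bis}(1) then makes $e^{-i\theta/\sqrt{\tau}}e^{i\tau(\Delta_A-V)}e^{i\theta/\sqrt{\tau}}$ approximately reachable in time $\tau^+$. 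Your convergence estimate goes through unchanged, with constants depending on $\|\theta\|_{C^2}$.

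\textbf{Your handling of the minus sign is also not justified.} A priori, the set of $\varphi$ with $e^{i\varphi}$ $L^2$-STAR is closed under sums and strong limits, but not under negation. No general principle makes the inverse of an $L^2$-STAR operator $L^2$-STAR, since the drift makes the system time-irreversible. Scaling $\theta\mapsto\lambda\theta$ only yields $-c|\nabla\theta|^2$ with $c\geq 0$, and strong limits of such multiplications do not help in general. So you cannot simply take a span.

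The paper, following Duca--Nersesyan, instead defines $\mathcal{H}_j$ as the largest vector space contained in the set of functions $\varphi_0-\sum_k|\nabla\varphi_k|^2$, and both signs come from the trigonometric algebra. For instance, $-|\nabla\sin\langle b,x\rangle|^2$ and $-|\nabla\cos\langle b,x\rangle|^2$ equal $\mp\frac{|b|^2}{2}\cos 2\langle b,x\rangle$ respectively, up to the constant $-\frac{|b|^2}{2}$, which is a global phase. Similarly, $-|\nabla(\varphi\pm\theta)|^2$ give both signs of the cross term $2\langle\nabla\varphi,\nabla\theta\rangle$ once the diagonal terms lie in the space.

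A minor slip: in Step 1 the constant control produces the generator $\tau(\Delta_A-V)+\varphi$, whose limit is $\varphi$, not $-\varphi$. This is harmless since $\mathcal{H}_0$ is a vector space.
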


\begin{proof}
\noindent \emph{Step 1: We prove that, for every $\varphi \in \mathcal{H}_0:=\text{span}_{\R}\{\sin \langle b_j,x\rangle, \cos \langle b_j , x \rangle; j\! \in\! \{1,\dots,d\}\! \}$, the operator $e^{i \varphi}$ is $L^2$-STAR.}
Let $\alpha \in \R^{2d}$ and 
$\varphi: x \in \T^d\! \mapsto \!\!\sum_{j=1}^{d} (\alpha_{2j-1} \sin + \alpha_{2j} \cos) \langle b_j,x\rangle$. For any $\tau>0$, the operator
$L_{\tau}:=e^{ i\tau(\Delta_A-V) + i\varphi }$
is $L^2$-exactly reachable in time $\tau$,
because associated with the constant controls $u_j=-\alpha_j/\tau$ applied on the time interval $[0,\tau]$.
For $\tau>0$, the operator $A_{\tau}:=\tau(\Delta_A-V)+ \varphi$ is self-adjoint on $D(A_{\tau}):=H^2(\T^d,\C)$,  because $V, \varphi \in L^\infty(\T^d,\R)$. 

The multiplicative operator $A_0:=\varphi$ is self-adjoint on $L^2(\T^d,\C)$.
$H^2(\T^d,\C)$ is a common core of $A_{\tau}$ and $A_0$. For every $\psi \in H^2(\T^d,\C)$,
$\| (A_{\tau}-A_0)\psi \|_{L^2} =
\tau \|(\Delta_A-V)\psi\|_{L^2} \rightarrow 0$ as $\tau \to 0$.
Thus, for every $\psi \in L^2(\T^d,\C)$,
$\| (L_{\tau}-e^{i\varphi})\psi\|_{L^2}
= \| (e^{i A_{\tau}} - e^{i A_0})\psi \|_{L^2}
\to 0$ as $\tau \to 0$.
By Lemma \ref{lem:reachable-operators-bis}, this proves that 
$e^{i \varphi}$ is $L^2$-STAR.

\medskip

\noindent \emph{Step 2: We prove that, if $\varphi \in C^2(\T^d,\R)$ and $e^{i \lambda \varphi}$ is $L^2$-STAR for every $\lambda \in \R$ then $e^{-i|\nabla\varphi|^2}$ is $L^2$-STAR.}
Let $\tau>0$. By Lemma \ref{lem:reachable-operators-bis}, the operator
$$\widetilde{L}_{\tau}:=
e^{i \frac{\varphi}{\sqrt{\tau}}} 
e^{i\tau(\Delta_A-V)}
e^{-i \frac{\varphi}{\sqrt{\tau}}}$$
is $L^2$-approximately reachable in time $\tau^+$.

The operator $\tau(\Delta_A-V)$ is self-adjoint with domain $H^2(\T^d,\C)$.
The multiplicative operator $\varphi/\sqrt{\tau}$ is self-adjoint on $L^2(\T^d,\C)$. $e^{i\varphi/\sqrt{\tau}}$ is an isomorphism of $H^2(\T^d,\C)$ because $\varphi \in C^2(\T^d,\R)$.
Then, direct computations give
\begin{align*}
\widetilde{L}_{\tau} &= \exp \left( i\tau
e^{i \frac{\varphi}{\sqrt{\tau}}} 
(\Delta_A-V)
e^{-i \frac{\varphi}{\sqrt{\tau}}}  \right)\\
&= \exp \left( i \tau (\Delta_A-V)-\sqrt{\tau}(2\langle \nabla \varphi , \nabla \rangle + \Delta \varphi+2i\langle A,\nabla\varphi\rangle) - i |\nabla \varphi|^2 \right).
\end{align*}
The operator
$$\widetilde{A}_{\tau} := 
\tau
e^{i \frac{\varphi}{\sqrt{\tau}}} 
(\Delta_A-V)
e^{-i \frac{\varphi}{\sqrt{\tau}}}$$
is self-adjoint on the domain $H^2(\T^d,\C)$.
The multiplicative operator 
$\widetilde{A}_0:=- |\nabla \varphi|^2$ 
is self-adjoint on $L^2(\T^d,\C)$ because
$\nabla \varphi \in L^{\infty}(\T^d,\R)$.
$H^2(\T^d,\C)$ is a common core of $\widetilde{A}_{\tau}$ and $\widetilde{A}_0$.
For every $\psi \in H^2(\T^d,\C)$,
$$ \|(\widetilde{A}_{\tau}-\widetilde{A}_0) \psi \|_{L^2} = 
\|\tau (\Delta-V) \psi +i\sqrt{\tau}(2\langle \nabla \varphi , \nabla \rangle  + \Delta \varphi+2i\langle A,\nabla\varphi\rangle)) \psi \|_{L^2} 
\underset{\tau \to 0 }{\longrightarrow} 0.$$
Thus, for every $\psi \in L^2(\T^d,\C)$,
$$\| (\widetilde{L}_{\tau} - e^{-i|\nabla \varphi|^2})\psi \|_{L^2} = 
\| (e^{i \widetilde{A}_{\tau}} - e^{i A_0}) \psi \|_{L^2}
\underset{\tau \to 0 }{\longrightarrow} 0.$$
This proves that $e^{-i|\nabla \varphi|^2}$ is $L^2$-STAR.

\medskip

\noindent \emph{Step 3: Iteration.} We define by induction an increasing sequence of sets $(\mathcal{H}_{j})_{j\in\N}$: 
$\mathcal{H}_0$ is defined in Step 1 and, for every $j \in \N^*$,
$\mathcal{H}_j$ is the largest vector space whose elements can be written as 
$$  \varphi_0-\sum_{k=1}^N|\nabla\varphi_k|^2 ; N\in\N, \varphi_0,\dots,\varphi_N\in \mathcal{H}_{j-1}. $$
Let $\mathcal{H}_{\infty} := \cup_{j\in\N} \mathcal{H}_j$.
Thanks to Steps 1 and 2,
for every $\varphi \in \mathcal{H}_{\infty}$, the operator $e^{i \varphi}$ is $L^2$-STAR. Moreover, the proof of \cite[Proposition 2.6]{duca-nersesyan} shows that $\mathcal{H}_{\infty}$ contains any trigonometric polynomial. In particular, $\mathcal{H}_{\infty}$ is dense in $L^2(\T^d,\R)$.

%\noindent \emph{Step 4: Conclusion.} Let $\varphi \in L^2(\T^d,\R)$. There exists $(\varphi_n)_{n\in\N} \subset \mathcal{H}_{\infty}$ such that $\| \varphi_n -\varphi \|_{L^2} \rightarrow 0 $ as $n \to \infty$. Up to an extraction, one may assume that $\varphi_n \rightarrow  \varphi$ almost everywhere on $\T^d$, as $n \to \infty$. The dominated convergence theorem proves that, for every $\psi \in L^2(\T^d,\C)$,
%$\| (e^{i \varphi_n}-e^{i\varphi}) \psi \|_{L^2} \rightarrow 0$ as $n \to \infty$. Finally, Step 3 and Lemma \ref{lem:reachable-operators} prove that the operator $e^{i\varphi}$ is $L^2$-STAR.
\end{proof}

\subsection{On $\R^d$}

\begin{proposition} \label{Prop:DP}
Let $V$ satisfying (\ref{Hyp:V_transp}).
System \eqref{eq:line} satisfies the following property: 
\begin{itemize}
\item for every $\varphi \in L^\infty_{loc} (\R^d,\R)$ or $\varphi \in L^2 (\R^d,\R)$, the operator $e^{i \varphi}$ is $L^2$-STAR.
\item for every $u\in \R, j=1,\dots,d$, the operator $e^{u\partial_{x_j}}$ is $L^2$-STAR
\end{itemize}
\end{proposition}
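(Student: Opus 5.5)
Following the structure of the proof of Proposition \ref{Prop:DN}, we plan to mimic the argument of \cite{duca-pozzoli} (the case $A=0$), checking at each step that the magnetic terms are of lower order and vanish in the small-time limit.

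\emph{Step 1 (phases in the control span).} For $\varphi=\sum_j\alpha_jx_j+\alpha_{d+1}e^{-|x|^2/2}$ and $\tau>0$, the operator $e^{i\tau(\Delta_A-V)+i\varphi}$ is exactly reachable in time $\tau$ with constant controls $u_j=-\alpha_j/\tau$. We take $C^\infty_c(\R^d,\C)$ as a common core of $\tau(\Delta_A-V)+\varphi$ and of the multiplication by $\varphi$, which is self-adjoint on its maximal domain. Then $\|(A_\tau-A_0)\psi\|_{L^2}=\tau\|(\Delta_A-V)\psi\|_{L^2}\to0$ for $\psi\in C^\infty_c$, so strong resolvent convergence and Lemma \ref{lem:reachable-operators-bis} show that $e^{i\varphi}$ is $L^2$-STAR.

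\emph{Step 2 (bracket).} We repeat Step 2 of Proposition \ref{Prop:DN} on $\R^d$. If $e^{i\lambda\varphi}$ is STAR for all $\lambda$, with $\varphi$ smooth and $\nabla\varphi$ bounded, then conjugating $e^{i\tau(\Delta_A-V)}$ by $e^{\pm i\varphi/\sqrt\tau}$ produces the generator
$$i\tau(\Delta_A-V)-\sqrt\tau\,(2\langle\nabla\varphi,\nabla\rangle+\Delta\varphi+2i\langle A,\nabla\varphi\rangle)-i|\nabla\varphi|^2 .$$
On $C^\infty_c$ the difference with $-i|\nabla\varphi|^2$ is $O(\sqrt\tau)$, since $A\in C^1$ is locally bounded. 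Hence $e^{-i|\nabla\varphi|^2}$ is STAR. Starting from $x_j$ and $e^{-|x|^2/2}$ and iterating with linear combinations, as in \cite{duca-pozzoli}, we expect to generate a family $\mathcal H_\infty$ that is dense in $L^2$ and, via suitable truncations, approximates $L^\infty_{loc}$ functions a.e. with local bounds. The algebra is independent of $A$, so we can import it directly from \cite{duca-pozzoli}. Dominated convergence then gives $e^{i\varphi_n}\to e^{i\varphi}$ strongly, and Lemma \ref{lem:reachable-operators-bis}(2) concludes the first item.

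\emph{Step 3 (translations).} We apply Proposition \ref{prop:gradient-control} with $\varphi=\frac u2 x_j$, so that $f=u\,b_j$ is constant and belongs to $\frak G$; its proof only uses STC of phases. This gives that $e^{\mathcal T_f}=e^{u\partial_{x_j}}$ is STAR. We must check that the auxiliary phases $e^{\pm 2i\langle A,\nabla\varphi\rangle}=e^{\pm iuA_j}$ and $e^{i|\nabla\varphi|^2/(n\tau)}$ are STAR. Both are in $L^\infty_{loc}$, because $A\in C^1$, so they are covered by the first item.

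\emph{Main obstacle.} We expect the hardest point to be the self-adjointness and common-core justifications on $\R^d$ with unbounded $\varphi$ (e.g. $\varphi=x_j$) and a magnetic drift. In particular we need essential self-adjointness of $e^{i\varphi/\sqrt\tau}(\Delta_A-V)e^{-i\varphi/\sqrt\tau}$ on $C^\infty_c$. We plan to obtain it from unitary equivalence with $\Delta_A-V$ together with \cite[Theorem 5.3]{shubin}, using that conjugation by a smooth phase preserves $C^\infty_c$. We also need the density and approximation argument for general $L^\infty_{loc}$ phases, for which we will rely on \cite{duca-pozzoli}.
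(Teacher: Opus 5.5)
Your route differs from the paper's and is viable. However, the density step, which is the heart of the first item, is not justified as written.

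\textbf{The gap.} Your Step 2 gives only the quadratic operation $\varphi\mapsto-|\nabla\varphi|^2$, and only with one sign: it yields $e^{-i\lambda|\nabla\varphi|^2}$ for $\lambda\ge 0$. The density result you want to import, the Hermite-function lemma of \cite{duca-pozzoli} that the paper invokes, concerns a different, linear operation: iterating $\varphi\mapsto\partial_{x_k}\varphi$ on $e^{-|x|^2/2}$. So ``the algebra is independent of $A$'' does not let you import it. You also cannot get $\partial_{x_k}$ from your translations at that stage. Those come in your Step 3 via Proposition \ref{prop:gradient-control}, which needs $e^{\pm iuA_j}$, i.e.\ the first item itself, so that would be circular.

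\textbf{How to close it.} The missing link is a polarization with $x_j$ combined with a scaling limit.
\begin{itemize}
\item Take $\psi$ smooth with bounded gradient such that $e^{i\lambda\psi}$ is STAR for all $\lambda$. Then $x_j+\epsilon\psi$ is admissible, and $|\nabla(x_j+\epsilon\psi)|^2=1+2\epsilon\,\partial_{x_j}\psi+\epsilon^2|\nabla\psi|^2$.
\item Apply your Step 2 to $\mu(x_j+\epsilon\psi)$ with $\mu^2=s/(2\epsilon)$, $s>0$, and discard the global phase $e^{-is/(2\epsilon)}$. This shows that $e^{-is\,\partial_{x_j}\psi}\,e^{-is\epsilon|\nabla\psi|^2/2}$ is STAR.
\item Let $\epsilon\to0$ (dominated convergence) and use Lemma \ref{lem:reachable-operators-bis}(2). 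Replacing $\psi$ by $-\psi$ gives the other sign, so $e^{i\lambda\partial_{x_j}\psi}$ is STAR for every $\lambda\in\R$.
\end{itemize}
A purely algebraic span of expressions $\varphi_0-\sum|\nabla\varphi_k|^2$ does not obviously produce $\pm\partial_{x_j}\psi$. The leftover term $-|\nabla\psi|^2$ can only be subtracted, never added back; the limit is what removes it. With this step, your class is closed under $\partial_{x_j}$ and contains $e^{-|x|^2/2}$, hence all Hermite functions. Density, the truncation to $L^\infty_{loc}$ phases, and your Step 3 then go through.

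\textbf{Comparison with the paper.} The paper avoids the quadratic bracket on $\R^d$ altogether.
\begin{itemize}
\item Conjugating $e^{i\tau(\Delta_A-V)}$ by $e^{\pm iux_j/(2\tau)}$ gives the magnetic translation $e^{u(\partial_{x_j}+iA_j)}$ directly, with no $A_j$-phase needed.
\item Conjugating that by $e^{\pm i\varphi/\tau}$ gives $e^{-i\partial_{x_j}\varphi}$ in the limit, because $A_j$ commutes with multiplication operators. This is exactly the linear bracket the Hermite lemma requires.
\item Pure translations $e^{u\partial_{x_j}}$ are recovered only at the end, using the phase $e^{-iuA_j}$.
\end{itemize}
Your Step 3 is essentially this same computation packaged through Proposition \ref{prop:gradient-control}, so it is fine once the first item is established. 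Your route buys a uniform treatment with the torus case. The paper's route buys a bracket that needs no sign-handling or extra limit argument.
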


\begin{proof}
\noindent \emph{Step 1: We prove that, for every $\varphi \in
\text{span}\{ x_1,\dots,x_d, e^{-|x|^2/2} \}$, the operator $e^{i \varphi}$ is $L^2$-STAR.} Let $\alpha \in\R^{d+1}$ and $\varphi: x \in \R^d \mapsto \alpha_1 x_1 + \dots + \alpha_d x_d + \alpha_{d+1} e^{-|x|^2/2}$. For any $\tau>0$, the operator 
$e^{i\tau(\Delta-V)+i\varphi}$ is $L^2$-exactly reachable in time $\tau$ because associated with the constant controls $u_j=-\alpha_j/\tau$ applied on the time interval $[0,\tau]$. The operator $\tau(\Delta_A-V)+\varphi$ is essentially self-adjoint on the domain $C^\infty_c(\R^d,\C)$, thus its closure $A_{\tau}$ is self-adjoint. The multiplicative operator $\varphi$ is self-adjoint on $\{ \psi \in L^2(\R^d,\C) ; \varphi \psi \in L^2(\R^d,\C) \}$. $C^\infty_c(\R^d,\C)$ is a common core of $A_{\tau}$ and $\varphi$. For every $\psi \in C^\infty_c(\R^d,\C)$, 
$\| (A_{\tau}-\varphi) \psi \|_{L^2} =
\tau \| (\Delta_A-V) \psi \|_{L^2} \to 0$ as $\tau \to 0$. Thus, for every $\psi \in L^2(\R^d,\C)$,
$\|(e^{i\tau(\Delta_A-V)+i\varphi} - e^{i\varphi})\psi  \|_{L^2} \to 0$ as $\tau \to 0$. Therefore, by Lemma \ref{lem:reachable-operators-bis}, the operator $e^{i \varphi}$ is $L^2$-STAR.

\medskip

\noindent \emph{Step 2: We prove that, for every $j \in \{1,\dots,d\}$ and $u \in \R$, the operator $e^{u \partial_{x_j}}$ is $L^2$-STAR.} Let 
$j \in \{1,\dots,d\}$, $u \in \R^*$. By Step 1 and Lemma \ref{lem:reachable-operators-bis}, for every $\tau>0$, the operator
$$L_{\tau}:= e^{\frac{iux_j}{2\tau}}e^{i\tau(\Delta_A-V)}e^{-\frac{iux_j}{2\tau}} $$
is $L^2$-approximately reachable in time $\tau^+$ .
The operator $\tau(\Delta_A-V)$ is essentially self-adjoint on $C^\infty_c(\R^d,\C)$ thus its closure $A$ is self-adjoint. The operator 
$B:=u x_j / (2\tau)$ is self-adjoint on $D(B):=\{ \psi \in L^2(\R^d,\C) ; x_j \psi \in L^2(\R^d,\C) \}$. 
$C^{\infty}_c(\R^d,\C)$ is a core of $A$.
The operator $e^{iB}$ is an isomorphism of $C^{\infty}_c(\R^d,\C)$. Then, direct computations show
\begin{equation}\label{eq:conjugate-dynamics}
L_{\tau} = \exp\left( i\tau
e^{\frac{iux_j}{2\tau}}
(\Delta_A-V)
e^{-\frac{iux_j}{2\tau}}
\right) = \exp \left(
i \tau (\Delta_A-V)+u (\partial_{x_j}+iA_j) - i \frac{u^2}{2\tau}
\right).
\end{equation}
Moreover, thanks to a global-phase change, we have that
$$L_{\tau}' := e^{i \frac{u^2}{2\tau}} L_{\tau}
=  \exp\left( 
i \tau (\Delta_A-V)+u( \partial_{x_j}+iA_j)
\right)$$
is also $L^2$-approximately reachable in time $\tau^+$.
The operator 
$$\tau
e^{\frac{iux_j}{2\tau}}
(\Delta_A-V)
e^{-\frac{iux_j}{2\tau}}
+ \frac{u^2}{2\tau}
=
\tau (\Delta_A-V) - i u \partial_{x_j} +uA_j
$$
is essentially self-adjoint on $C^\infty_c(\R^d,\C)$, thus its closure $A_{\tau}$ is self-adjoint. The operator
$A_0:= -i u \partial_{x_j}+uA_j$ is self-adjoint on
$D(A_0):=\{ \psi \in L^2(\R^d,\C) ; (-i\partial_{x_j}+A_j) \psi \in L^2(\R^d,\C) \}$. $C^\infty_c(\R^d;\C)$ is a common core of $A_{\tau}$ and $A_0$. For every $\psi \in C^{\infty}_c(\R^d,\C)$,
$\| (A_{\tau}-A_0)\psi \|_{L^2} = 
\tau \| (\Delta-V) \psi \|_{L^2} \to 0$ as $\tau \to 0$.
Thus, for every $\psi \in L^2(\R^d,\C)$, 
$\|(L_{\tau}'-e^{u (\partial_{x_j}+iA_j)})\psi \|_{L^2}
=
\| (e^{i A_{\tau}} - e^{i A_0}) \psi \|_{L^2}
\to 0$ as $\tau \to 0$. This proves that the operator $e^{u (\partial_{x_j}+iA_j)}$ is $L^2$-STAR.

\medskip

\noindent \emph{Step 3: We prove that, if $\varphi \in C^1(\R^d,\R)$ and $e^{i \lambda \varphi}$ is $L^2$-STAR for every $\lambda \in \R$ then the operator $e^{-i \partial_{x_j} \varphi}$ is $L^2$-STAR.} 
Let $\tau>0$. The assumption on $\varphi$ and Step 2 prove that the operator
$$\widetilde{L}_{\tau} := e^{i\frac{\varphi}{\tau}}e^{\tau(\partial_{x_j}+iA_j)}e^{-i\frac{\varphi}{\tau}}$$
is $L^2$-STAR. Direct computations show that
$$\widetilde{L}_{\tau}=\exp(\tau(\partial_{x_j}+iA_j)-i\partial_{x_j}\varphi), $$
hence, for every $\psi \in L^2(\R^d,\C)$,
$\| (\widetilde{L}_{\tau}-e^{-i\partial_{x_j}\varphi}) \psi \|_{L^2} \to 0$ as $\tau \to 0$. Finally,
the operator $e^{-i \partial_{x_j} \varphi}$ is $L^2$-STAR.

\medskip

\noindent \emph{Step 4: Iteration.} We define by induction an increasing sequence of sets $(\mathcal{H}_{j})_{j\in\N}$ by
$\mathcal{H}_0=\text{span}\{ e^{-|x|^2/2} \}$ and, for every $j \in \N^*$,
$$\mathcal{H}_j := \text{span}_{\R} \left\{  \varphi_0-\sum_{k=1}^d \partial_{x_k} \varphi_k ;  \varphi_0,\dots,\varphi_d \in \mathcal{H}_{j-1} \right\},$$
and $\mathcal{H}_{\infty} := \cup_{j\in\N} \mathcal{H}_j$.
Thanks to Steps 1 and 3,
for every $\varphi \in \mathcal{H}_{\infty}$, the operator $e^{i \varphi}$ is $L^2$-STAR. Moreover, by the proof of \cite[Lemma 5.2]{duca-pozzoli}, $\mathcal{H}_{\infty}$ is dense in $L^2(\R^d,\R)$ because it contains the linear combinations of Hermite functions.

\medskip

\noindent \emph{Step 5: Conclusion.} Let $\varphi \in L^2 (\R^d,\R)$. There exists $(\varphi_n)_{n\in\N} \subset \mathcal{H}_{\infty}$ such that $\| \varphi_n -\varphi \|_{L^2} \rightarrow 0 $ as $n \to \infty$. Up to an extraction, one may assume that $\varphi_n \rightarrow  \varphi$ almost everywhere on $\R^d$, as $n \to \infty$. The dominated convergence theorem proves that, for every $\psi \in L^2(\R^d,\C)$,
$\| (e^{i \varphi_n}-e^{i\varphi}) \psi \|_{L^2} \rightarrow 0$ as $n \to \infty$. Finally, Step 4 proves that the operator $e^{i\varphi}$ is $L^2$-STAR.

To conclude, notice that if $\varphi\in L^\infty_{loc}$, then $e^{i\varphi}$ is STAR. Indeed, given $\psi_0\in L^2(\R^d,\C)$,\\$\varepsilon>0$, there exists $R>0$ such that $\|e^{i\varphi}\psi_0-e^{i\varphi \chi_{[-R,R]^d}}\psi_0\|_{L^2}\leq \varepsilon/2. $ Now, $\varphi \chi_{[-R,R]^d}\in L^2(\R^d,\R)$, hence $e^{i\varphi \chi_{[-R,R]^d}}$ is STAR: so, there exists a control $u\in PWC[0,\tau]$ with $\tau<\epsilon$ such that $\|\psi(\tau,u,\psi_0)-e^{i\varphi\chi_{[-R,R]^d}}\psi_0\|_{L^2}<\varepsilon/2$. By triangular inequality, we see that $\|\psi(\tau,u,\psi_0)-e^{i\varphi}\psi_0\|_{L^2}<\varepsilon$, hence $e^{i\varphi}$ is STAR.

In particular, since $A_j\in C^1\subset L^\infty_{loc}$, the operator $(e^{-iuA_j/n}e^{(u\partial_{x_j}+iuA_j)/n})^n$ is STAR, so is its strong limit as $n\to \infty$, $e^{u\partial_{x_j}}$.
\end{proof}

The proof of Theorem \ref{thm:examples} then follows exactly as in \cite{beauchard-Pozzoli2}.
%%%%%%%%%%%%%%%%%%%%%%%%%%%%%%%%%%%%%%%%%%%%%%%%%%%%%%%%%%%%%%%

\medskip

\textbf{Acknowledgments.}
The author acknowledges financial support from grants ANR-25-CE40-4062 (project QUEST), ANR-24-CE40-3008-01 (Project QuBiCCS) and ANR-11-LABX-0020 (Labex Lebesgue), as well as from the CNRS through the MITI interdisciplinary programs.

\bibliographystyle{siamplain}
\bibliography{references}

\end{document}